\newtheorem{thm}{Theorem}[section]
 \newtheorem{cor}[thm]{Corollary}
 \newtheorem{lem}[thm]{Lemma}
 \newtheorem{prop}[thm]{Proposition}
 \theoremstyle{definition}
 \newtheorem{defn}[thm]{Definition}
 \theoremstyle{remark}
 \newtheorem{rem}[thm]{Remark}
 \theoremstyle{notation}
 \numberwithin{equation}{section}
\newcommand{\eps}{\epsilon}
\newcommand{\beq}{\begin{equation}}
\newcommand{\eeq}{\end{equation}}
\newcommand{\imn}{\mathbf{i}}
\newcommand{\gdim}{\mathbf{n}}
  \newcommand{\s}{{S}}
 \newcommand{\A}{\mathcal{A}}
\newcommand{\f}{\mathcal{F}}
\newcommand{\I}{\mathcal{I}}
  \newcommand{\h}{\mathfrak{h}}
 \newcommand{\bneg}{\mathfrak{b}}
\newcommand{\nneg}{\mathfrak{n}}
 \newcommand{\g}{\mathfrak{g}}
 \newcommand{\Ad}{\textrm{Ad}}
 \newcommand{\ad}{\textrm{ad}}
 \newcommand{\Z}{\mathbb{Z}}
 \newcommand{\Lie}{\mathfrak{L}}
 \newcommand{\q}{z}
\newcommand{\gauge}{\mathcal{N}}
 \newcommand{\lop}[1]{\mathfrak{L}(#1)}
\newcommand{\bil}[2]{{\langle #1 | #2\rangle}}
\newcommand{\npos}{\mathfrak{n}^+}
\begin{document}

\title[ Frobenius manifolds and  $W$-algebras]
 {Frobenius manifolds from principal classical $W$-algebras
 }

\author{Yassir Ibrahim Dinar }

\address{Faculty of Mathematical Sciences, University of Khartoum,  Sudan; and The Abdus Salam International Centre for Theoretical Physics (ICTP), Italy. Email: dinar@ictp.it.}
\email{dinar@ictp.it}
\subjclass[2000]{Primary 37K10; Secondary 35D45}

\keywords{ Bihamiltonian geometry, Frobenius
manifolds, classical $W$-algebras, Drinfeld-Sokolov reduction, Slodowy slice}

\begin{abstract}
We obtain  polynomial Frobenius manifolds from classical $W$-algebras associated to principal nilpotent elements in simple Lie algebras.
\end{abstract}
\maketitle
\tableofcontents
\section{Introduction}

This work is a continuation of \cite{mypaper} where we began to develop a construction  of algebraic Frobenius manifolds from Drinfeld-Sokolov reduction to support a Dubrovin conjecture.

A Frobenius manifold is a manifold $M$ with the structure of Frobenius algebra on the tangent space $T_t$ at any point
$t \in  M $ with certain compatibility conditions \cite{DuRev}. We say $M$ is semisimple or massive if $T_t$ is semisimple for generic $t$. This structure locally
corresponds to a potential  satisfying  a system of partial differential equations  known in topological field theory as the  Witten-Dijkgraaf-Verlinde-Verlinde (WDVV) equations. We say $M$ is algebraic if, in the flat coordinates, the potential is an algebraic function.  Dubrovin conjecture is stated as follows: Semisimple irreducible algebraic Frobenius manifolds
with positive degrees  correspond to quasi-Coxeter (primitive) conjugacy classes in finite Coxeter groups. We discussed in \cite{mypaper} how the examples of algebraic Frobenius manifolds constructed from Drinfeld-Sokolov reduction support this conjecture.

Let $e$ be a \textbf{principal nilpotent element} in a simple Lie algebra $\g$ over $\mathbb{C}$. We fix, by using the Jacobson-Morozov theorem,   a semisimple element $h$ and a nilpotent element $f$ such that $\A=\{e,h,f\}$ is an $sl_2$-triple. Let $\kappa+1$ be the Coxeter number of $\g$. We prove the following

\begin{thm}\label{main thm}
  The \textbf{Slodowy slice}
\beq
Q':=e+\ker \ad ~f
\eeq
has a natural structure of polynomial Frobenius manifold of  degree $\kappa-1\over \kappa+1$.
\end{thm}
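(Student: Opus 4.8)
The plan is to realize the claimed Frobenius structure as the dispersionless (leading-order) limit of the bihamiltonian structure produced by the Drinfeld--Sokolov reduction attached to the principal triple $\A=\{e,h,f\}$, following the framework of \cite{mypaper}. First I would set up the Drinfeld--Sokolov pencil of compatible Poisson brackets $\{\cdot,\cdot\}_1$ and $\{\cdot,\cdot\}_2$ on the loop space $\mathcal{L}(\g)$ of smooth maps $S^1\to\g$: the second is the affine Lie--Poisson (Kirillov--Kostant) structure, and the first is obtained by freezing it at the cyclic element $e$. The principal gradation induced by $\ad h$ singles out the nilpotent gauge subalgebra $\nneg$ with gauge group $\gauge$, and the principal classical $W$-algebra is the reduction of this pencil by the $\gauge$-action. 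The decisive geometric input is Kostant's theorem, which guarantees that the Slodowy slice $Q'=e+\ker\ad f$ is a global cross-section for the gauge orbits, so that the reduced bihamiltonian structure lives on $\mathcal{L}(Q')$ and may be computed by Dirac reduction.

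Next I would introduce coordinates on $Q'$ adapted to the $\ad h$-grading. By Kostant, $\ker\ad f$ has a basis of lowest-weight vectors of the principal $sl_2$-subrepresentations of $\g$, with $\ad h$-eigenvalues $-2m_1,\dots,-2m_r$, where $m_1=1<\cdots<m_r=\kappa$ are the exponents of $\g$ and $r=\operatorname{rank}\g$. These give global polynomial coordinates $u^1,\dots,u^r$ on the affine slice $Q'$, in which the reduced brackets take the hydrodynamic form $\{u^i(x),u^j(y)\}_a=g_a^{ij}(u)\,\delta'(x-y)+\Gamma^{ij}_{a,k}(u)\,u^k_x\,\delta(x-y)$ for $a=1,2$. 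The leading symbols $g_1^{ij},g_2^{ij}$ then define a pencil of contravariant metrics on $Q'$, and the bihamiltonian compatibility forces it to be a flat pencil in Dubrovin's sense: $g_1$ is the invariant flat metric $\eta$, constant in its own flat coordinates, while $g_2$ is the intersection form. The scaling symmetry generated by $\ad h$ descends to an Euler vector field $E$ on $Q'$ whose eigenvalues on the flat coordinates are the invariant degrees $d_i=m_i+1$, and which rescales the pencil with the expected weights.

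Finally I would invoke Dubrovin's reconstruction theorem: a flat pencil of metrics equipped with a linear Euler field and the appropriate quasihomogeneity and potentiality conditions determines a quasihomogeneous Frobenius manifold, with $\eta$ as the invariant flat metric and the multiplication recovered from the pencil. Associativity (the WDVV equations) and the symmetry of the structure constants are encoded in the joint flatness of $g_1$ and $g_2$ together with their compatibility, so they need not be verified by hand. Polynomiality of the potential follows from the polynomial dependence of the reduced brackets on $u$, itself a consequence of the Kostant cross-section being affine. The charge is read off from $E$: since the degrees $d_i$ run from $2$ to the Coxeter number $\kappa+1$, normalizing by $\kappa+1$ gives $d=1-\tfrac{2}{\kappa+1}=\tfrac{\kappa-1}{\kappa+1}$.

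The hardest part, I expect, will be the content of the second and third steps: proving that the leading symbols genuinely constitute a flat pencil satisfying all of Dubrovin's axioms, in particular the flatness of the intersection form $g_2$ and the existence of a polynomial potential. These rest on the special geometry of the principal nilpotent---ultimately Kostant's identification of $Q'$ with the Coxeter orbit space $\h/W$---rather than on formal bihamiltonian manipulations, and this is precisely where the \emph{principal} hypothesis is used in an essential way.
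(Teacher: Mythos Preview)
Your overall architecture---Drinfeld--Sokolov reduction, dispersionless limit, flat pencil of metrics, Dubrovin's reconstruction theorem---is the same as the paper's. But two points deserve correction or comparison.

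First, a genuine slip: the first Poisson bracket is \emph{not} obtained by freezing at $e$, and $e$ is not the cyclic element. In the paper $P_1(v)=\tfrac{1}{\epsilon}[a,v]$ with $a=X^r_{-\kappa}\in\g_{-2\kappa}$ the lowest-weight vector, and the cyclic element is $y_1=e+a$. Freezing at $a$ (equivalently shifting by $a$) is what ties $P_1$ to the \emph{opposite Cartan subalgebra} $\h'=\ker\ad\,y_1$, and this is essential for the nondegeneracy argument below. Freezing at $e$ would not produce the correct first metric.

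Second, and more substantively, your plan for the ``hardest part'' diverges from the paper. You propose to fall back on Kostant's identification of $Q'$ with the Coxeter orbit space $\h/W$ and the invariant theory of the Weyl group. That approach does work---it is essentially the route of \cite{DCG} and \cite{DubCentral}---but it is exactly what this paper sets out to \emph{avoid}. Here the key verifications are done intrinsically on the slice: nondegeneracy of $g_1^{ij}$ is proved by computing its antidiagonal entries and identifying them with the Gram matrix $A_{ij}=\bil{y_i}{y_j}$ of $\bil{\cdot}{\cdot}$ on $\h'$ (Proposition~\ref{nondeg} via Proposition~\ref{Gold}); the relation $\partial_{z^r}g_2^{ij}=g_1^{ij}$ is read off from how the top coordinate $q_r^\kappa$ enters the Leibniz expansion (Proposition~\ref{diff relaton}); polynomiality and quasihomogeneity of $g_2^{ij}$ come from the Dirac reduction formula together with the polynomiality of $(\widetilde F^{\alpha\beta})^{-1}$ on the slice (Propositions~\ref{dirac-to-bihami}, \ref{mat poly}, \ref{homg of g2}); and the regularity tensor $R_i^j=\tfrac{\eta_i}{\kappa+1}\delta_i^j$ is checked directly (Theorem~\ref{my thm}). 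What the paper's approach buys is a self-contained argument that does not invoke $\h/W$ or Saito's flat coordinates, and hence has a chance of generalizing to other distinguished nilpotents where no Coxeter orbit-space picture is available; what your approach buys is brevity, since the Coxeter case is already fully worked out in the literature.
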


Let us recall  some structures related to the principal nilpotent element $e$. The element  $h\in \A$ defines a $\mathbb{Z}$-grading on $\g$ called the Dynkin grading given as follows
\begin{equation}
\g=\oplus_{i\in \Z} \g_i, ~~\g_i=\{q\in \g: ad~h(q)=i q\}.
\end{equation}
We  fix   below a certain  nonzero element $a\in \g_{-2\kappa}$. It will follow  from the work of Kostant \cite{kostBetti}  that  $y_1=e+a$ is regular semisimple. The Cartan subalgebra $\h'=\ker \ad~ y_1$ is called the  opposite Cartan subalgebra.

Our main idea is to use the theory of local bihamiltonian structure  on a loop space to construct the polynomial
Frobenius manifold on $Q'$. Recall that a bihamiltonian structure on a manifold $M$ is two compatible Poisson brackets on $M$. It is well known that the dispersionless limit of a local bihamiltonian structure
on the loop space $\lop M$ of a finite dimensional manifold $M$ (if it exists) always
gives a bihamiltonian structure of hydrodynamic type:
\begin{equation}\{t^i(x),t^j(y)\}_{1,2} = g^{ij}_{1,2}(t(x))
  \delta'(x-y) + \Gamma^{ij}_{1,2;k}(t(x)) t^k_x \delta(x-y),\end{equation}
defined on the loop space $\lop M$. This in turn gives a flat pencil of
metrics $g^{ij}_{1,2}$ on $M$ which under some assumptions
corresponds to a Frobenius structure on $M$  \cite{DFP}.

 We perform  Drinfeld-Sokolov reduction \cite{DS} (see also \cite{mypaper} or \cite{BalFeh1}) using the representation theory of $\A$ and the properties of $\h'$  to obtain a bihamiltonian structure on the affine loop space  \beq Q=e+ \lop {\ker \ad\, f}.\eeq
To this end we start by defining a bihamiltonian structure $P_1$ and $P_2$ in $\lop \g$. The Poisson structure $P_2$ is the standard Lie-Poisson structure and $P_1$ depends on the adjoint action of $a$.  In the Drinfeld-Sokolov  reduction the space $Q$ will be transversal to an action of the adjoint group of $\lop {\nneg}$ on a suitable affine subspace of $\lop\g$. Here $\nneg$ is the subalgebra
\beq\nneg:=\bigoplus_{i\leq -2}\g_i\eeq
 The space of local functionals with densities in the ring $R$ of invariant differential polynomials of this action is closed under $P_1$ and $P_2$. This defines the Drinfeld-Sokolov bihamiltonian structure on $Q$ since the coordinates of $Q$ can be interpreted as generators of the ring $R$. The second reduced Poisson structure on $Q$ is called the \textbf{classical $W$-algebra}. We call it \textbf{principal} since it is related to the principal nilpotent element. We then prove that the Drinfeld-Sokolov bihamiltonian structure admits a dispersionless limit and gives the promised polynomial Frobenius manifold.

 We mention  that from  the work of Dubrovin \cite{DCG} and Hetrling \cite{HER} semisimple polynomial Frobenius manifolds with positive degrees are already classified. They correspond to Coxeter conjugacy classes in Coxeter groups. Dubrovin constructed all these polynomial Frobenius manifolds on the orbit spaces of Coxeter groups using the results of \cite{saito}. There is another method to obtain the classical $W$-algebra  associated to principal nilpotent elements known in the literature as Muira type transformation \cite{DS}.  It was used in \cite{DubCentral} (see also \cite{dinarphd})  to prove that the dispersionless limit of the Drinfeld-Sokolov bihamiltonian structure gives  the  polynomial Frobenius manifold defined  on the orbit space of the corresponding Weyl group \cite{DCG}. The proof depends also on the invariant theory of  Coxeter groups. In the present work we give a new method  to uniform the construction of  polynomial Frobenius manifolds from  Drinfeld-Sokolov reduction   which  depends only on the theory of opposite Cartan subalgebras.

\section{Preliminaries}

\subsection{Frobenius manifolds and  local bihamiltonian structures}

Starting we want to recall some definitions and review the construction of Frobenius manifolds from local bihamiltonian structure of hydrodynamics type.

A \textbf{Frobenius manifold} is a manifold $M$ with
the structure of Frobenius algebra on the tangent space $T_t$ at
any point $t \in M $ with certain compatibility conditions \cite{DuRev}.  This structure  locally corresponds to a potential
$\mathbb{F}(t^1,...,t^n)$ satisfying the WDVV equations
\begin{equation} \label{frob}
 \partial_{t^i}
\partial_{t^j}
\partial_{t^k} \mathbb{F}(t)~ \eta^{kp} ~\partial_{t^p}
\partial_{t^q}
\partial_{t^r} \mathbb{F}(t) = \partial_{t^r}
\partial_{t^j}
\partial_{t^k} \mathbb{F}(t) ~\eta^{kp}~\partial_{t^p}
\partial_{t^q}
\partial_{t^i} \mathbb{F}(t)
  \end{equation}
where $(\eta^{-1})_{ij} = \partial_{t^n} \partial_{t^i} \partial_{t^j}
\mathbb{F}(t)$ is a constant matrix. Here we assume that the quasihomogeneity condition takes the form
\begin{equation}
\sum_{i=1}^n d_i t_i \partial_{t^i} \mathbb{F}(t) = \left(3-d \right) \mathbb{F}(t)
\end{equation}
where $d_n=1$. This condition defines {\bf the degrees} $d_i$ and {\bf the charge} $d$ of  the Frobenius structure on  $M$. If $\mathbb{F}(t)$ is an algebraic function we call $M$ an
\textbf{algebraic Frobenius manifold}.

 Let $\lop M$ denote the loop space of $M$, i.e the space of smooth maps from the circle to $M$. A local Poisson bracket $\{.,.\}_1$ on $\lop M$ can be written in the form \cite{DZ}
\begin{equation} \label{genLocPoissBra}\{u^i(x),u^j(y)\}_1=
\sum_{k=-1}^\infty \epsilon^k \{u^i(x),u^j(y)\}^{[k]}_1.
 \end{equation}
 Here $\epsilon$ is just a parameter and
 \begin{equation}\label{genLocBraGen}
\{u^i(x),u^j(y)\}^{[k]}_1=\sum_{s=0}^{k+1} A_{k,s}^{i,j}
\delta^{(k-s+1)}(x-y),
 \end{equation}
where  $A_{k,s}^{i,j}$ are homogenous polynomials in $\partial_x^j
u^i(x)$ of degree $s$ (we assign degree $j$ to
$\partial_x^j u^i(x)$)and $\delta(x-y)$ is the Dirac  delta function defined by
\[\int_{S^1} f(y) \delta(x-y) dy=f(x).\]
 The first terms can be written
as follows
\begin{eqnarray}
  \{u^i(x),u^j(y)\}^{[-1]}_1 &=& F^{ij}_1(u(x))\delta(x-y) \\
  \{u^i(x),u^j(y)\}^{[0]}_1 &=& g^{ij}_{1}(u(x)) \delta' (x-y)+ \Gamma_{1k}^{ij}(u(x)) u_x^k \delta (x-y)
\end{eqnarray}
 Here  the entries $g^{ij}_1(u)$, $F^{ij}_1(u)$ and $\Gamma_{1k}^{ij}(u)$ are smooth functions on the finite dimension space $M$. We note that, under the change of coordinates on $M$ the matrices $g^{ij}_1(u)$, $F^{ij}_1(u)$ change as a $(2,0)$-tensors.

The matrix $F^{ij}_1(u)$  defines a Poisson structure on
$M$. If $F^{ij}_1(u(x))=0$ and   $\{u^i(x),u^j(y)\}^{[0]}_1\neq 0$ we say the Poisson bracket admits a \textbf{dispersionless limit}. If the Poisson bracket admits a dispersionless limit then  $\{u^i(x),u^j(y)\}^{[0]}_1$ defines a
 Poisson bracket on $\lop M$ known as \textbf{Poisson bracket of
hydrodynamic type}. By nondegenerate Poisson bracket of
hydrodynamic type we mean those with the metric $g^{ij}_1$ is nondegenerate. In
this case the matrix $g^{ij}_1(u)$ defines a contravariant flat metric on the cotangent space $T^*M$ and
$\Gamma_{1k}^{ij}(u)$ is its  contravariant Levi-Civita connection \cite{DN}.

Assume there are two Poisson structures $\{.,.\}_2$ and  $\{.,.\}_1$ on $\lop M$ which form  a bihamiltonian structure, i.e $\{.,.\}_\lambda:= \{.,.\}_2+\lambda \{.,.\}_1$ is a Poisson structure on $\lop M$ for every $\lambda$. Consider the notations for the leading terms of $\{.,.\}_1$ given above and write the leading terms of $\{.,.\}_2$ in the form
\begin{eqnarray}
  \{u^i(x),u^j(y)\}^{[-1]}_2 &=& F^{ij}_2(u(x))\delta(x-y) \\
  \{u^i(x),u^j(y)\}^{[0]}_2 &=& g^{ij}_{2}(u(x)) \delta' (x-y)+ \Gamma_{2k}^{ij}(u(x)) u_x^k \delta (x-y)
\end{eqnarray}
Suppose that $\{.,.\}_1$ and $\{.,.\}_2$ admit a dispersionless limit as well as $\{.,.\}_\lambda$ for generic $\lambda$. In addition, assume the corresponding Poisson brackets of hydrodynamics type are nondegenerate. Then by definition $g_1^{ij}(u)$ and $g_2^{ij}(u)$ form what is called  \textbf{flat pencil of metrics} \cite{DFP}, i.e $g_\lambda^{ij}(u):=g_2^{ij}(u)+\lambda g_1^{ij}(u)$ defines a flat metric on $T^*M$ for generic $\lambda$ and its  Levi-Civita connection is given by $\Gamma_{\lambda k}^{ij}(u)=\Gamma_{2k}^{ij}(u)+\lambda \Gamma_{1k}^{ij}(u)$.

\begin{defn} \label{def reg} A contravariant flat pencil of metrics on a manifold $M$ defined by  the matrices $g_1^{ij}$ and $g_2^{ij}$ is
called \textbf{ quasihomogenous of  degree} $d$ if there exists a
function $\tau$ on $M$ such that the vector fields
\begin{eqnarray}  E&:=& \nabla_2 \tau, ~~E^i
=g_2^{is}\partial_s\tau
\\\nonumber  e&:=&\nabla_1 \tau, ~~e^i
= g_1^{is}\partial_s\tau  \end{eqnarray} satisfy the following
properties
\begin{enumerate}
\item $ [e,E]=e$.

\item $ \Lie_E (~,~)_2 =(d-1) (~,~)_2 $.
\item $ \Lie_e (~,~)_2 =
(~,~)_1 $.
\item
$ \Lie_e(~,~)_1
=0$.
\end{enumerate}
Here for example $\Lie_E$ denote the Lie derivative  along the vector field $E$ and $(~,~)_1$ denote the metric defined by the matrix  $g^{ij}_1$. In addition, the  quasihomogenous flat pencil of metrics is called \textbf{regular} if  the
(1,1)-tensor
\begin{equation}\label{regcond}
  R_i^j = {d-1\over 2}\delta_i^j + {\nabla_1}_i
E^j
\end{equation}
is  nondegenerate on $M$.
\end{defn}

The connection between the theory of Frobenius manifolds and flat pencil of metrics is encoded in the following theorem

\begin{thm}\cite{DFP}\label{dub flat pencil}
A contravariant quasihomogenous regular  flat pencil of metrics of degree $d$ on a manifold $M$ defines a Frobenius structure on $M$ of the same degree.
 \end{thm}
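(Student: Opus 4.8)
The plan is to realize the prospective Frobenius structure by declaring $g_1$ to be the invariant (flat) metric $\eta$ and $g_2$ to be the intersection form, and then to reconstruct the potential $\mathbb{F}$ together with the multiplication, the unit field $e$ and the Euler field $E$. Since $g_1$ is flat, I would first pass to its flat coordinates $t^1,\dots,t^n$, in which $g_1^{ij}=\eta^{ij}$ is a constant symmetric matrix and $\Gamma_{1k}^{ij}=0$. In these coordinates condition (4), $\Lie_e(\,,\,)_1=0$, says the candidate unit $e=\nabla_1\tau$ is a Killing field for $g_1$; being simultaneously a $g_1$-gradient, its lowered form has symmetric covariant derivative, so the Killing equation forces $e$ to be covariantly constant. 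After a linear change I may therefore assume $e=\partial_{t^1}$. This reduces everything to analysing the single nonconstant metric $g_2$ and its Levi-Civita connection $\Gamma_2$ relative to the flat frame of $g_1$.

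The heart of the argument is the flat-pencil lemma of \cite{DFP}: because $g_\lambda=g_2+\lambda g_1$ is flat for generic $\lambda$ and its Levi-Civita connection is $\Gamma_2+\lambda\Gamma_1$, in the flat coordinates of $g_1$ the Christoffel symbols of $g_2$ are expressible through second $\eta$-derivatives of a vector potential, and one extracts from them a $(3,0)$-tensor $c_{ijk}$. I would then show that the raised tensor $c_{ij}^{\,k}=\eta^{k\ell}c_{ij\ell}$ defines a commutative product on each $T_tM$ with unit $e$, and that $c_{ijk}$ is symmetric in all three indices, so that locally $c_{ijk}=\partial_{t^i}\partial_{t^j}\partial_{t^k}\mathbb{F}$ for a single function $\mathbb{F}$. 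The decisive point is associativity: the vanishing of the curvature of $g_\lambda$ for \emph{all} $\lambda$ is equivalent, order by order in $\lambda$, to the WDVV equations \eqref{frob}, so flatness of the pencil is exactly what makes the product associative.

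To pin down the quasihomogeneity and the charge I would use conditions (1)--(3). Condition (2), $\Lie_E(\,,\,)_2=(d-1)(\,,\,)_2$, says that $E$ scales the intersection form and hence fixes the degrees $d_i$ and the charge $d$; condition (3), $\Lie_e(\,,\,)_2=(\,,\,)_1$, identifies $g_2$ as the shift of $g_1$ along $E$, realizing the intersection form as multiplication by the Euler field in the Frobenius algebra, $g_2^{ij}=E^k c_k^{ij}$ with $c_k^{ij}=\eta^{ia}\eta^{jb}c_{abk}$; and condition (1), $[e,E]=e$, normalizes the weight of $E$ so that $\mathbb{F}$ obeys the stated quasihomogeneity with $d_n=1$. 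Together these show that the constructed potential is quasihomogeneous of charge $d$, matching the degree in the statement.

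Finally, regularity enters to guarantee that the reconstruction is unobstructed. The operator $R_i^j=\tfrac{d-1}{2}\delta_i^j+{\nabla_1}_iE^j$ in \eqref{regcond} encodes the scaling weights of the flat coordinates, and its nondegeneracy is a non-resonance condition that prevents degenerate behaviour of $E$ and ensures that the product recovered from $g_2$ via condition (3), together with the invariant metric $g_1$, assembles consistently into the single quasihomogeneous potential $\mathbb{F}$. I expect the main obstacle to be precisely the associativity step: verifying that the symmetric tensor built from $\Gamma_2$ in the $g_1$-flat frame is both totally symmetric and associative requires carefully translating the zero-curvature condition on $g_\lambda$ into the WDVV system, and this is where the full strength of the flat-pencil hypothesis, rather than mere flatness of each metric separately, is used.
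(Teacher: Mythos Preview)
The paper does not prove this theorem at all: it is quoted verbatim from Dubrovin's \cite{DFP} and used as a black box (see the proof of Theorem~\ref{main thm}, which simply invokes Theorems~\ref{my thm} and~\ref{dub flat pencil}). There is therefore no ``paper's own proof'' to compare against.

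Your sketch is a faithful outline of Dubrovin's original argument in \cite{DFP}: pass to flat coordinates of $g_1$, use the flat-pencil condition to write $\Gamma_{2k}^{ij}$ as second derivatives of a vector potential, use quasihomogeneity and regularity to descend from a vector potential to a scalar potential $\mathbb{F}$, and read off WDVV from the zero-curvature condition on $g_\lambda$. Two minor remarks. First, in this paper's conventions the unit field is $e=\partial_{t^r}=\partial_{t^n}$ (the \emph{last} flat coordinate), not $\partial_{t^1}$; correspondingly $\tau$ is proportional to $t^1$. This is purely notational but would matter if you tried to match formulas here. Second, your description of regularity as a ``non-resonance condition'' is slightly soft: in Dubrovin's proof the invertibility of $R$ is used concretely to solve for the structure constants $c^{ij}_k$ from the intersection form via $g_2^{ij}=R^i_\ell\, c^{\ell j}_k E^k$ (up to normalization), i.e.\ it is what lets you pass from the vector potential $f^j$ to a single scalar $\mathbb{F}$. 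Without it the reconstruction genuinely fails, not merely becomes awkward.
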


It is well known that from a Frobenius manifold we always have a flat pencil of metrics but it does not necessary satisfy the regularity condition \eqref{regcond}. In the notations of  \eqref{frob} from a Frobenius structure on $M$, the flat pencil of metrics is
found from the relations \begin{eqnarray}\label{frob eqs} \eta^{ij}&=&g_1^{ij} \\
\nonumber g_2^{ij}&=&(d-1+d_i+d_j)\eta^{i\alpha}\eta^{j\beta}
\partial_\alpha
\partial_\beta \mathbb{F}
\end{eqnarray}
This flat pencil of metric is quasihomogenous  of degree $d$ with $\tau =t^1$. Furthermore we have
\begin{equation}
E=\sum_i d_i t^i {\partial{t^i}},~~~e={\partial_{t^n}}
\end{equation}

\subsection{Principal nilpotent element and opposite Cartan subalgebra}
We  review some facts about  principal nilpotent elements in  simple Lie algebra we need to perform the Drinfeld-Sokolov reduction. In particular, we recall   the  concept of the opposite Cartan subalgebra introduced by Kostant which is the  main  ingredient in this work.

Let $\g$ be a simple Lie algebra over $\mathbb C$ of rank $r$.  We fix a principal nilpotent element $e\in \g$. By definition a nilpotent element is called principal if $\g^e:=\ker \ad e$ has dimension equals to $r$.  Using  the Jacobson-Morozov theorem we fix a semisimple element $h$ and a nilpotent element $f$ in $\g$ such that $\{e,h,f\}$ generate $sl_2$ subalgebra $\A\subset \g$, i.e
\begin{equation}
[h,e]=2 e, ~~~ [h,f]=-2f,~~~[e,f]=h.
\end{equation}

The element  $h$ define a $\mathbb{Z}$-grading on $\g$ called the Dynkin grading given as follows
\begin{equation}
\g=\oplus_{i\in \Z} \g_i, ~~\g_i=\{q\in \g: ad~h(q)=i q\}.
\end{equation}
 It is well known that $\g_i=0$ if $i$ is odd and  \beq \bneg=\oplus_{i\leq 0} \g_i\eeq is a Borel subalgebra with   \beq \nneg=\oplus_{i\leq
{-2}}\g_i=[\bneg,\bneg]\eeq is  a nilpotent subalgebra.

We normalize the invariant bilinear from $\bil . . $ on $\g$ such that $\bil e f=1$ and we denote the exponents of the Lie algebra $\g$ as follows
\begin{equation}
1=\eta_1<\eta_2\leq \eta_3\ldots\leq\eta_{r-1}<\eta_r.
\end{equation}
We will refer to the number $\eta_r$ by $\kappa$. Recall that $\kappa+1$ is the Coxeter number of $\g$ and the exponents satisfy the relation
\beq \eta_i+\eta_{r-i+1}=\kappa+1.\eeq
We also recall that  for all simple Lie algebras the exponents  are different except for the Lie algebra of type $D_{2n}$ the exponent $n-1$ appears twice.

Consider the restriction of the adjoint representation of $\g$  to $\A$. Under this restriction $\g$ decomposes to irreducible $\A$-submodules
\begin{equation}
\g=\oplus V^i.
\end{equation}
with $ \dim V^i=2 \eta_i+1$ \cite{HumLie}. We normalize this decomposition by using the following proposition
 \begin{prop}\label{reg:sl2:normalbasis}
 There exists a decomposition of $\g$ into a sum of irreducible $\A$-submodules $\g=\oplus_{i=1}^{r} V^i$ in such a way that there is a    basis $X_I^i, I=-\eta_i,-\eta_i+1,...,\eta_i$ in each $V^i, ~i=1,\ldots,r$ satisfying the following relations
\begin{equation}\label{sl2expand}
X_{I}^i={1\over (\eta_i+I)!} \ad \,e^{\eta_i+I}~X_{-\eta_i}^i~ ,~~~~I=-\eta_i,-\eta_i+1,\ldots, \eta_i.
\end{equation}
and
\beq\label{sl2bilinear}
<X_I^i,X_J^j>=\delta_{i,j}\delta_{I,-J} (-1)^{\eta_i-I+1}{2\eta_i\choose \eta_i-I}.
\eeq
Furthermore
\begin{eqnarray}\label{sl2relation}
\ad \, h\,X_I^i&=& 2I X_I^i.\\\nonumber
\ad \, e\,X_I^i&=& (\eta_i+I+1) X_{I+1}^i.\\\nonumber
\ad \, f\, X_I^i &=& (\eta_i -I+1) X_{I-1}^i.
\end{eqnarray}
\end{prop}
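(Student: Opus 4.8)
The plan is to reduce everything to the representation theory of the $sl_2$-triple $\A=\{e,h,f\}$ acting on $\g$ through the adjoint representation. First I would invoke Weyl complete reducibility to write $\g$ as a direct sum of irreducible $\A$-modules; the stated dimensions $\dim V^i=2\eta_i+1$ are exactly the content of the decomposition recalled from \cite{HumLie}, so I take the multiset of dimensions as given and only need to produce the normalized basis together with the orthogonality. The crucial preliminary point is to arrange the decomposition to be orthogonal for $\langle\,,\rangle$. Since $\langle\,,\rangle$ is $\A$-invariant, Schur's lemma forces two non-isomorphic irreducible summands to be orthogonal, so the only delicate case is an isotypic component of multiplicity greater than one, which by the remark on the exponents occurs solely for type $D_{2n}$ (the repeated exponent $n-1$). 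On such a component the invariant form induces a nondegenerate pairing on the multiplicity space, and diagonalizing it lets me select mutually orthogonal irreducible submodules. Having fixed this orthogonal decomposition, in each $V^i$ I choose a lowest weight vector $X_{-\eta_i}^i$ (annihilated by $\ad f$, of $\ad h$-eigenvalue $-2\eta_i$) and \emph{define} the remaining basis vectors by the prescribed formula \eqref{sl2expand}.

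With this definition the relations \eqref{sl2relation} are immediate $sl_2$-computations. The eigenvalue statement $\ad h\,X_I^i=2I\,X_I^i$ holds because $\ad e$ raises the $\ad h$-weight by $2$ while $X_{-\eta_i}^i$ has weight $-2\eta_i$; the raising relation $\ad e\,X_I^i=(\eta_i+I+1)X_{I+1}^i$ is simply the ratio of factorials in \eqref{sl2expand}. The lowering relation $\ad f\,X_I^i=(\eta_i-I+1)X_{I-1}^i$ is the only one requiring an argument: using $[\ad f,\ad e]=-\ad h$ together with $\ad f\,X_{-\eta_i}^i=0$, an induction on $k=\eta_i+I$ (equivalently the classical identity $\ad f\,(\ad e)^{k}X_{-\eta_i}^i=k(2\eta_i-k+1)(\ad e)^{k-1}X_{-\eta_i}^i$) produces precisely the coefficient $\eta_i-I+1$. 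The boundary cases are consistent: at $I=\eta_i$ the raising relation gives $(2\eta_i+1)X_{\eta_i+1}^i=0$ and at $I=-\eta_i$ the lowering relation gives $(2\eta_i+1)X_{-\eta_i-1}^i=0$, reflecting that $I$ ranges exactly over $-\eta_i,\ldots,\eta_i$.

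For the pairing \eqref{sl2bilinear} I would first dispose of the Kronecker deltas: $\delta_{i,j}$ is the orthogonality of the decomposition arranged above, while $\delta_{I,-J}$ follows from $\ad h$-invariance, since $2I\langle X_I^i,X_J^j\rangle=-2J\langle X_I^i,X_J^j\rangle$ forces the pairing to vanish unless $I=-J$. It then remains to compute $a_I:=\langle X_I^i,X_{-I}^i\rangle$. Applying invariance with $\ad e$ to the pair $(X_I^i,X_{-I-1}^i)$ and inserting the relations just proved gives the recursion
\begin{equation}
(\eta_i+I+1)\,a_{I+1}=-(\eta_i-I)\,a_I,\qquad\text{i.e.}\qquad a_{I+1}=-\frac{\eta_i-I}{\eta_i+I+1}\,a_I.
\end{equation}
One checks directly that the proposed value $a_I=(-1)^{\eta_i-I+1}\binom{2\eta_i}{\eta_i-I}$ satisfies this recursion, since the ratio $\binom{2\eta_i}{\eta_i-I-1}/\binom{2\eta_i}{\eta_i-I}$ equals $(\eta_i-I)/(\eta_i+I+1)$; so it suffices to match the initial value $a_{-\eta_i}=\langle X_{-\eta_i}^i,X_{\eta_i}^i\rangle=-1$. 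Here the nondegeneracy of $\langle\,,\rangle$ on the orthogonal summand $V^i$ guarantees that this lowest-to-highest weight pairing is nonzero, so rescaling $X_{-\eta_i}^i$ by a suitable scalar (over $\Complex$ a square root always exists) normalizes $a_{-\eta_i}$ to $-1$. The symmetry of the resulting coefficient under $(i,I)\leftrightarrow(j,-I)$ is automatic, because $\eta_i\in\Z$ makes $(-1)^{\eta_i-I+1}=(-1)^{\eta_i+I+1}$ and $\binom{2\eta_i}{\eta_i-I}=\binom{2\eta_i}{\eta_i+I}$.

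The main obstacle I anticipate is the simultaneous bookkeeping at the two places where choices enter: producing the orthogonal decomposition in the repeated-exponent ($D_{2n}$) case, and then rescaling each lowest weight vector so that the single normalization $a_{-\eta_i}=-1$ holds in every summand at once. Both hinge on the nondegeneracy of the invariant form restricted to, and between, the $\A$-submodules, so I would take care to record that restriction-nondegeneracy argument explicitly; once it is in place, the remainder is the routine $sl_2$ recursion above.
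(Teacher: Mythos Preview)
Your proposal is correct and follows essentially the same line as the paper's proof: arrange an orthogonal decomposition into irreducible $\A$-submodules (handling the possible repeated exponent by diagonalizing on the multiplicity space, which is the paper's Gram--Schmidt step), read off the $sl_2$-relations from the definition \eqref{sl2expand}, use $\ad h$-invariance for $\delta_{I,-J}$, and derive the value of $\langle X_I^i,X_{-I}^i\rangle$ by the $\ad e$-recursion before normalizing the lowest weight vector. Your remark that the final normalization requires a square root (rescaling $X_{-\eta_i}^i$ so that the bilinear value becomes $-1$) is in fact more precise than the paper's wording, and your invocation of Schur's lemma is just a cleaner packaging of the paper's step-operator argument for orthogonality of non-isomorphic summands.
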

\begin{proof}
The proof that one could compose the Lie algebra as irreducible $\A$-submodules satisfying \eqref{sl2expand} and \eqref{sl2relation} is standard and can be found  in \cite{HumLie} or \cite{kostBetti}. Let $\g=\oplus_{i=1}^{r} V^i$ be such  decomposition. It is easy to prove $\bil {V^i}{V^j}=0$ in the case  $\eta_i\neq\eta_j$ by applying the step operators $\ad~ e$ and using the  invariance of the bilinear form. Hence the proof is reduced to the case of irreducible $\A$-submodules of the same dimension. But there is at most two irreducible submodules of the same dimension. Assume $V^{i_1}$ and $V^{i_2}$ have  the same dimension and denote the corresponding basis  $X^{i_1}_I$ and $X^{i_1}_J$, respectively. Then one can prove by using the step operator $\ad~ e$ that the subspaces $V^{i_1}$ and $V^{i_2}$  are orthogonal if and only if $\bil {X^{i_1}_0}{X^{i_2}_0}= 0$. But it obvious that  the restriction of the invariant bilinear form to ${X^{i_1}_0}$ and ${X^{i_2}_0}$ is nondegenerate. Hence by applying the Gram-Schmidt  procedure we can assume that $\bil {X^{i_1}_0}{X^{i_2}_0}= 0$.
Therefore, we can assume that the given  decomposition satisfying $\bil {V^i}{V^j}=0$ if $i\neq j$. It remains to obtain the normalization \eqref{sl2bilinear}. From the invariance of the bilinear form we have
\begin{equation}
\bil{h.X_{I}^i}{X_J^i}=(2 I) \bil{X_I^i}{X_J^i}
\end{equation}
while
\begin{equation}
-\bil{X_I^i}{h.X_J^i}=-(2 J)\bil{X_I^i}{X_J^i}
\end{equation}
Therefore $\bil{X_I^i}{X_J^j}=0$ if $I+J\neq 0$. We calculate using the step operator $\ad ~e$ where  $I\geq 0$ the value
\begin{eqnarray}
\bil{X_I^i}{X_{-I}^i}&=&{1\over (\eta_i-I) } \bil{X_{I}^i}{e.X_{-I-1}^i} \\\nonumber
&=&{-1\over \eta_i-I } \bil{e.X_{I}^i}{X_{-I-1}^i} \\\nonumber
&=&{(-1)(\eta_i-I+1)\over \eta_i-I } \bil{X_{I+1}^i}{X_{-I-1}^i} \\\nonumber
&=&{(-1)^{\eta_i-I}(\eta_i-I+1)(\eta_i-I+2)\ldots 2\eta_i\over (\eta_i-I)(\eta_i-I-1)\ldots(1) } \bil{X_{\eta_i}^i}{X_{-\eta_i}^i}\\\nonumber
&=& (-1)^{\eta_i-I} {2\eta_i\choose \eta_i-I} \bil{X_{\eta_i}^i}{X_{-\eta_i}^i}.
\end{eqnarray}
The result follows by multiplying $X_I^i$ by the value of  $-\bil{X_{\eta_i}^i}{X_{-\eta_i}^i}^{-1}$. We note that the formula \eqref{sl2bilinear} will give the same result when  replacing  $I$ with $-I$. This ends the proof.

\end{proof}

 Note that the normalized basis for $V^1$ are $X_1^1=-e,~X_0^1=h,~X_{-1}^1=f$ since it is isomorphic to $\A$ as a vector subspace.

It is easy to see that
\beq
\ad~ e: \g_i \to \g_{i+2}
\eeq
is injective for $i\leq -1$ and surjective for $i\geq0$. Hence   the subalgebra $\g^f:=\ker \ad ~f$ has a basis $X_{-\eta_i}^i, ~i=1,\ldots,r$ and
\beq
\bneg=\g^f\oplus \ad ~e (\nneg).
\eeq
The affine space \[Q'=e+\g^f\] is called the \textbf{Slodowy slice}. It is  transversal  to the orbit  of $e$ under the adjoint group action.

We summarize Kostant results about the relation between the   principal nilpotent element $e$ and Coxeter conjugacy class in Weyl group of $\g$.
\begin{thm}\cite{kostBetti}
The element $y_1=e+ X_{-2\kappa}^r$ is regular semisimple. Denote $\h'$ the  Cartan subalgebra containing $y_1$, i.e $\h':=\ker \ad~y_1$ and  consider the adjoint group element $w$ defined by  $w:=\exp {\pi \imn\over \kappa+1}\ad~h$. Then $w$ acts on $\h'$ as a representative of the Coxeter conjugacy class  in the Weyl group acting on $\h'$. Furthermore, the element $y_1$ can be completed to a basis  $y_i,~i=1,\ldots,r$ for $\h'$
having the form
\[ y_i=v_i+u_i, ~~u_i\in \g_{2\eta_i},~ v_i\in \g_{2\eta_i-2(\kappa+1)}\]
and such that $y_i$ is an eigenvector of $w$ with eigenvalue $\exp {\pi \imn\eta_i\over \kappa+1}$.
\end{thm}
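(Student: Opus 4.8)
I would read $a$ as the fixed generator of the one-dimensional space $\g_{-2\kappa}$ (the lowest-weight vector of the top module $V^r$), so that $y_1=e+a$ is Kostant's cyclic element. The plan has three stages: first I record that $w$ merely rescales $y_1$ and therefore stabilizes $\h'$; then I prove $y_1$ is regular by a semicontinuity argument and semisimple by evaluating the basic invariants on it; and finally I split $\h'$ into $w$-eigenlines, which simultaneously produces the basis $y_i$ of the stated shape and exhibits $w|_{\h'}$ as a Coxeter element.

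The automorphism $w$ acts on $\g_m$ by the scalar $\exp(\tfrac{\pi\imn m}{\kappa+1})$. Since $-2\kappa\equiv 2\pmod{2(\kappa+1)}$, the summands $e\in\g_2$ and $a\in\g_{-2\kappa}$ are multiplied by the same root of unity $\zeta:=\exp(\tfrac{2\pi\imn}{\kappa+1})$, whence $w\,y_1=\zeta\,y_1$. Consequently $w(\h')=w(\ker\ad y_1)=\ker\ad(\zeta y_1)=\h'$, so $w$ stabilizes $\h'$. For regularity I introduce the one-parameter group $g_t:=\exp\!\big(\tfrac12(\ln t)\ad h\big)$, acting on $\g_{2k}$ by $t^{k}$; then $g_t\,y_1=t\,e+t^{-\kappa}a=t\,(e+t^{-(\kappa+1)}a)$, so every $e+s\,a$ with $s\neq0$ is conjugate up to a scalar to $y_1$ and has a centralizer of the same dimension. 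As $e$ is regular, $\dim\ker\ad e=r=\operatorname{rk}\g$ is the global minimum, so upper semicontinuity of $x\mapsto\dim\ker\ad x$ gives $\dim\ker\ad(e+sa)=r$ for small $s\neq0$, hence for all $s\neq0$; thus $\dim\h'=r$ and $y_1$ is regular.

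To upgrade to semisimplicity, let $p_1,\dots,p_r$ be homogeneous generators of the invariant ring with $\deg p_j=\eta_j+1$. Invariance under $g_t$ forces $p_j(e+s\,a)=c_j\,s^{(\eta_j+1)/(\kappa+1)}$ on $\mathbb{C}^\ast$; since this is a polynomial in $s$ and $0<\tfrac{\eta_j+1}{\kappa+1}\le 1$ with equality only when $j=r$, we obtain $p_j(y_1)=0$ for $j<r$ and $p_r(y_1)=c_r=\bil{\nabla p_r(e)}{a}$ (gradient taken with respect to $\bil{\cdot}{\cdot}$). Regularity of $e$ makes $\{\nabla p_j(e)\}$ a graded basis of $\g^e$ with $\nabla p_r(e)\in\g_{2\kappa}=\mathbb{C}\,X^r_\kappa$, and $\bil{X^r_\kappa}{a}\neq0$, so $c_r\neq0$ and $y_1$ is not nilpotent. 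Writing $y_1=s_1+n_1$ for its Jordan decomposition, $s_1$ has the same invariants and $\ker\ad y_1\subseteq\ker\ad s_1$; if $s_1$ is regular these kernels coincide, $y_1$ lies in the Cartan $\ker\ad s_1$, and $n_1=0$. Thus everything reduces to the statement that the cyclic point $(0,\dots,0,c_r)$ lies off the discriminant $\Delta$, equivalently that the coefficient of $p_r^{\,r}$ in $\Delta$ is nonzero. This is the substantive input and the main obstacle; it is Kostant's result \cite{kostBetti}, and can also be deduced from Springer's regular eigenvector for the Coxeter element, whose invariants are of the same shape $(0,\dots,0,\ast)$ with nonzero discriminant.

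With $y_1$ regular semisimple, $\h'$ is a $w$-stable Cartan subalgebra and splits into $w$-eigenlines. Any $w$-eigenvector $y\in\h'$ is homogeneous for $\ad h$ modulo $2(\kappa+1)$, and since the Dynkin grading occupies only degrees in $[-2\kappa,2\kappa]$, a fixed residue class meets it in at most the two slots $\g_{2\eta}$ and $\g_{2\eta-2(\kappa+1)}$; hence $y=u+v$ with $u\in\g_{2\eta}$, $v\in\g_{2\eta-2(\kappa+1)}$. Comparing top $\ad h$-components in $[y_1,y]=0$ yields $[e,u]=0$, i.e. $u\in\g^e=\bigoplus_i\mathbb{C}\,X^i_{\eta_i}$, so the leading slot is an exponent $\eta_i$ and the eigenvalue, read off from the weight $2\eta_i$ of $u$, is $\exp(\tfrac{2\pi\imn\,\eta_i}{\kappa+1})$. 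The leading-term map $\h'\to\g^e$ is therefore a graded isomorphism (with the evident two-dimensional adjustment at the repeated exponent of $D_{2n}$), producing a basis $y_i=u_i+v_i$ of the asserted form. Finally $w$ normalizes the maximal torus $\exp\h'$ and hence represents an element $\bar w\in W$; it carries the eigenvalue $\zeta$ of order $\kappa+1$ with the regular eigenvector $y_1$, so by Springer's classification of regular elements $\bar w$ is a representative of the Coxeter conjugacy class.
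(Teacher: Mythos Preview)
The paper does not supply a proof of this theorem; it is quoted verbatim from Kostant \cite{kostBetti} and used as a black box. So there is no in-paper argument to compare against, and your proposal should be read as a reconstruction of Kostant's proof rather than as an alternative to anything the author wrote.

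As such a reconstruction your outline is sound. The scaling argument via $g_t=\exp(\tfrac12(\ln t)\ad h)$ correctly shows that all $e+sa$, $s\neq0$, are conjugate up to scalar, and the semicontinuity step gives regularity. The computation of $p_j(e+sa)$ and the identification $\nabla p_r(e)\in\g_{2\kappa}\setminus\{0\}$ are right, so $p_r(y_1)\neq0$ and only $p_r$ survives. You are honest that the remaining step---that the fibre $(0,\dots,0,c_r)$ meets the regular semisimple locus---is the genuine content of Kostant's theorem; this cannot be bypassed by the soft arguments above, and invoking either Kostant's discriminant calculation or Springer's regular eigenvector for the Coxeter element is the standard way to close it. The final paragraph, projecting a $w$-eigenvector $y\in\h'$ to its top graded piece and using $[e,u]=0$ to land in $\g^e$, is exactly the mechanism that produces the basis $y_i=u_i+v_i$; your injectivity check (if $u=0$ then $v\in\g^e$ sits in negative degree, hence $v=0$) makes the leading-term map $\h'\to\g^e$ an isomorphism.

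Two small remarks. First, the eigenvalue you compute, $\exp\!\big(\tfrac{2\pi\imn\,\eta_i}{\kappa+1}\big)$, is the correct one (these are the Coxeter eigenvalues); the exponent $\pi\imn\eta_i/(\kappa+1)$ printed in the statement is a slip. Second, the element written $X^r_{-2\kappa}$ in the statement is, in the paper's own notation, the lowest-weight vector $X^r_{-\kappa}\in\g_{-2\kappa}$; your reading of $a$ as the generator of $\g_{-2\kappa}$ is the intended one.
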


Let  $a$ denote the element $X_{-2\kappa}^r$. The  element $y_1=e+a$ is called a \textbf{cyclic element} and the   Cartan subalgebra $\h'=\ker \ad ~y_1$ is called the \textbf{opposite Cartan subalgebra}. We  fix  a basis $y_i$ for $\h'$ satisfying the theorem above.  It is easy to see that  $u_i, i=1,...,r$ form a homogenous  basis for $\g^e$.  We assume the basis $y_i$ are normalized such  that
\begin{equation}
u_i=-X_{\eta_i}^i.
\end{equation}
Form construction this normalization does not effect $y_1$.

Let us define the matrix of the invariant bilinear form on $\h'$
\beq
A_{ij}:=\bil {y_i}{y_j}=-\bil {X_{\eta_i}^i}{v_j}-\bil {v_i}{X_{\eta_j}^i},~i,j=1,\ldots,r.
\eeq
The following proposition summarize some useful properties we need in the following sections.
\begin{prop}\label{Gold}
The matrix $A_{ij}$ is a nondegenerate  and antidiagonal  with respect to the exponents $\eta_i$, i.e $A_{ij}=0, ~{\rm if}~\eta_i+\eta_j\neq\kappa+1$. Moreover, the commutators of $a$ and  $X_{\eta_i}^i$ satisfy the relations
\begin{equation}
{\bil {[a,X_{\eta_i}^i]}{X_{\eta_j-1}^j}\over 2\eta_j }+{\bil {[a,X_{\eta_j}^j]}{X_{\eta_i-1}^i}\over 2 \eta_i}=  A_{ij}
\end{equation}
for all $i,j=1,\ldots,r$.
\end{prop}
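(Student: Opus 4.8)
The plan is to treat the three assertions almost separately, the real content sitting in the commutator identity. For antidiagonality I would first record the homogeneous degrees. Since $\ad h\, X_{\eta_i}^i = 2\eta_i X_{\eta_i}^i$, the normalization $u_i = -X_{\eta_i}^i$ gives $u_i \in \g_{2\eta_i}$, while $v_i \in \g_{2\eta_i - 2(\kappa+1)}$ by the preceding theorem. Invariance of $\bil{\cdot}{\cdot}$ forces $\bil{\g_p}{\g_q} = 0$ whenever $p + q \neq 0$. Expanding $A_{ij} = \bil{v_i + u_i}{v_j + u_j}$ into four terms and adding the degrees, the term $\bil{u_i}{u_j}$ sits in degree $2(\eta_i + \eta_j) > 0$ and the term $\bil{v_i}{v_j}$ in degree $2(\eta_i + \eta_j) - 4(\kappa+1) < 0$ (using $1 \le \eta_i \le \kappa$), so both vanish. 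What remains is $A_{ij} = \bil{u_i}{v_j} + \bil{v_i}{u_j}$, and the degree of each surviving term is $2(\eta_i + \eta_j - \kappa - 1)$, which is zero exactly when $\eta_i + \eta_j = \kappa + 1$. This is antidiagonality, and replacing $u_i = -X_{\eta_i}^i$ reproduces the displayed formula for $A_{ij}$.

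For nondegeneracy I would invoke Kostant's theorem: $y_1 = e + a$ is regular semisimple, so $\h'$ is a genuine Cartan subalgebra and the $y_i$ are a basis of it. The restriction of any nondegenerate invariant symmetric form on a semisimple Lie algebra to a Cartan subalgebra is nondegenerate, since the weight decomposition is orthogonal and pairs $\g_\alpha$ with $\g_{-\alpha}$; hence the Gram matrix $A_{ij}$ is nondegenerate.

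The commutator identity is the main step, and its engine is the relation $[e, v_i] = [a, X_{\eta_i}^i]$. I would obtain it by expanding $0 = [y_1, y_i] = [e + a, v_i + u_i]$ and sorting the four brackets by Dynkin degree: $[e, u_i]$ lies in degree $2\eta_i + 2$, both $[e, v_i]$ and $[a, u_i]$ in degree $2\eta_i - 2\kappa$, and $[a, v_i]$ in degree $2\eta_i - 4\kappa - 2$. Since these degrees are distinct, each strand vanishes on its own; the middle one reads $[e, v_i] + [a, u_i] = 0$, which with $u_i = -X_{\eta_i}^i$ is exactly $[e, v_i] = [a, X_{\eta_i}^i]$. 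Using this together with invariance and the formula $\ad e\, X_{\eta_j - 1}^j = 2\eta_j X_{\eta_j}^j$ from Proposition \ref{reg:sl2:normalbasis}, I compute
\[
\bil{[a, X_{\eta_i}^i]}{X_{\eta_j - 1}^j} = \bil{[e, v_i]}{X_{\eta_j - 1}^j} = -\bil{v_i}{[e, X_{\eta_j - 1}^j]} = -2\eta_j\, \bil{v_i}{X_{\eta_j}^j},
\]
so the first term of the left-hand side is $-\bil{v_i}{X_{\eta_j}^j}$; the same computation with $i$ and $j$ interchanged makes the second term $-\bil{v_j}{X_{\eta_i}^i} = -\bil{X_{\eta_i}^i}{v_j}$. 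Their sum is $-\bil{X_{\eta_i}^i}{v_j} - \bil{v_i}{X_{\eta_j}^j} = A_{ij}$ by the formula from the first step.

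The only non-mechanical point — the place I would be most careful — is the graded splitting of $[y_1, y_i] = 0$: one must use that the preceding theorem presents $y_i$ as a sum of exactly the two homogeneous pieces $v_i$ and $u_i$, so that no further terms contaminate the degree-$(2\eta_i - 2\kappa)$ strand. Granting that, the isolation of $[e, v_i] + [a, u_i] = 0$ is forced, and everything else is invariance of $\bil{\cdot}{\cdot}$ together with the $sl_2$ action formulas.
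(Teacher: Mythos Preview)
Your argument is correct and matches the paper's proof almost verbatim for nondegeneracy and for the commutator identity; in particular your derivation of $[e,v_i]=[a,X_{\eta_i}^i]$ from the graded splitting of $[y_1,y_i]=0$ is exactly the step the paper uses. The one difference is in the antidiagonality: the paper argues via the Coxeter element $w=\exp\frac{\pi\imn}{\kappa+1}\ad h$, using $\bil{y_i}{y_j}=\bil{wy_i}{wy_j}$ together with the eigenvalue of $w$ on $y_i$, whereas you work directly with the $\ad h$-grading and the orthogonality $\bil{\g_p}{\g_q}=0$ for $p+q\neq 0$. Since $w$ acts on $\g_p$ by the scalar $\exp\frac{p\pi\imn}{\kappa+1}$, these two arguments are literally the same orthogonality read in two ways; your version is slightly more direct and avoids invoking the eigenvalue statement from Kostant's theorem.
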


\begin{proof}
The matrix $A_{ij}$ is nondegenerate since  the restriction of the invariant bilinear form to a Cartan subalgebra is nondegenerate. The fact that it  is anidiagonal with respect to the exponents follows from the identity
\beq
\bil {y_i}{y_j}=\bil {w y_i}{w y_j}=\exp {(\eta_i+\eta_j)\pi \imn\over \kappa+1}\bil {y_i}{y_j}
\eeq
where  $w:=\exp {\pi \imn\over \kappa+1}\ad~h$. For the second part of the proposition we note that  the commutator  of $y_1= e+a$ and $y_i=v_i-X_{\eta_i}^i$ gives the relation
\begin{equation}
[e,v_i]=[a,X_{\eta_i}^i], ~ i=1,...,r.
\end{equation}
Which  in turn give the following equality for every  $i,~j=1,...,r$
\begin{eqnarray}
\bil {[a,X_{\eta_i}^i]}{X_{\eta_j-1}^j}&=&\bil {[e,v_i]}{X_{\eta_j-1}^j}=-\bil {v_i}{[e,X_{\eta_j-1}^j]}\\\nonumber &=& -2 \eta_j \bil {v_i}{X_{\eta_j}^j}
\end{eqnarray}
but then
\begin{equation}
{\bil {[a,X_{\eta_i}^i]}{X_{\eta_j-1}^j}\over 2\eta_j }+{\bil {[a,X_{\eta_j}^j]}{X_{\eta_i-1}^i}\over 2 \eta_i}= -\bil {v_i}{X_{\eta_j}^j}-\bil {v_j}{X_{\eta_i}^i}= A_{ij}.
\end{equation}
\end{proof}

\section{Drinfeld-Sokolov reduction}

We will review the standard Drinfeld-Sokolov reduction associated with the principal nilpotent element \cite{DS} (see also \cite{mypaper}).

We introduce the following bilinear form on the loop algebra $\lop\g$:
\begin{equation} (u|v)=\int_{S^1}\bil {u(x)}{v(x)} dx,~ u,v \in \lop M,
\end{equation}
and we identify $\lop\g$ with $\lop \g^*$ by means of this bilinear form.  For a functional $\f$ on $\lop\g$ we
define the gradient $\delta \f (q)$ to be the unique element in
$\lop\g$ such that
\begin{equation}
\frac{d}{d\theta}\f(q+\theta
\dot{s})\mid_{\theta=0}=\int_{S^1}\langle\delta \f|\dot{s}\rangle dx
~~~\textrm{for all } \dot{s}\in \lop\g.
\end{equation}

Recall that we fixed an element $a\in \g$ such that $y_1=e+a$ is a cyclic element. Let us introduce a bihamiltonian structure on $\lop \g$ by means of Poisson tensors
\begin{eqnarray}\label{bih:stru on g}
P_2(v)(q(x))&=&{1\over \eps}[\eps \partial_x+q(x),v(x)].\\\nonumber
P_1(v)(q(x))&=&{1\over \eps} [a,v(x)].
\end{eqnarray}
It is well known fact that these define a bihamiltonian structure on $\lop \g$ \cite{MRbook}.

We consider the gauge transformation of the adjoint group $G$ of $\lop\g$  given by
\begin{eqnarray}
q(x)&\rightarrow& \exp \ad~ s(x)( \partial_x+q(x))-\partial_x
\end{eqnarray}
where $s(x),~q(x)\in \lop \g$.  Following Drinfeld and Sokolov \cite{DS}, we consider the restriction of this action  to the adjoint group $\gauge$ of $\lop {\nneg}$.
\begin{prop}(\cite{mypaper}, \cite{Pedroni2})\label{DS as momentum }
The action of $\gauge$ on $\lop\g$ with Poisson tensor \beq P_\lambda:=P_2+\lambda P_1\eeq is
Hamiltonian for all $\lambda$. It admits a momentum map $J$ to be
the projection
\[J:\lop\g\to\lop \npos\]
 where  $\npos$ is the image of $\nneg$ under the killing map. Moreover, $J$ is $\Ad^*$-equivariant.
\end{prop}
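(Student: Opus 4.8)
The plan is to exhibit $J$ as a collective (momentum) map in the standard way: for each element $s$ of the Lie algebra $\lop\nneg$ of $\gauge$ I will produce a Hamiltonian $H_s$ on $\lop\g$ whose Hamiltonian vector field with respect to $P_\lambda$ is the fundamental vector field of $s$, and then check that $s\mapsto H_s$ respects brackets, which is the infinitesimal form of $\Ad^*$-equivariance. The whole argument rests on two grading facts recorded above: the invariant form pairs $\g_i$ with $\g_j$ only when $i+j=0$ (so it restricts to a perfect pairing $\nneg\times\npos\to\Complex$ and satisfies $\bil{\nneg}{\nneg}=0$), and $a\in\g_{-2\kappa}$, so that $\ad\,a$ maps $\nneg$ into $\nneg$.

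First I would write the infinitesimal action. Adding the two tensors in \eqref{bih:stru on g} gives the single formula $P_\lambda(v)(q)=\tfrac1\eps[\eps\pal_x+q+\lambda a,\,v]$, so the Lax operator underlying the pencil is $\eps\pal_x+q+\lambda a$. Differentiating the gauge action (conjugation of this operator by $\exp\ad\,s$) at the identity, the fundamental vector field of $s\in\lop\nneg$ is $\hat s(q)=[s,\eps\pal_x+q+\lambda a]=-\eps\,P_\lambda(s)(q)$. Next I propose $J$ to be the projection onto $\lop\npos$ along $\lop\bneg$ in the splitting $\lop\g=\lop\npos\oplus\lop\bneg$, and set $H_s(q):=(J(q)\,|\,s)$. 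Because $s\in\nneg$ pairs trivially with $\bneg$ (the degrees add up to a strictly negative number, so $\nneg^\perp=\bneg$), we have $(J(q)\,|\,s)=(q\,|\,s)$ for every $q$; hence $H_s$ is linear in $q$ with constant gradient $\delta H_s=s$. Therefore $P_\lambda(\delta H_s)=P_\lambda(s)=-\tfrac1\eps\hat s$, which is the fundamental vector field up to the overall normalization built into \eqref{bih:stru on g}. Since $\delta H_s$ does not involve $\lambda$, the same $J$ serves simultaneously for every member of the pencil, and in particular the action preserves $P_\lambda$.

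It remains to prove $\Ad^*$-equivariance, which I regard as the heart of the matter. Infinitesimally this is the identity $\{H_s,H_t\}_\lambda=H_{[s,t]}$ (up to the same normalization) for $s,t\in\lop\nneg$, i.e. the vanishing of the $2$-cocycle measuring the failure of $s\mapsto H_s$ to be a Lie-algebra map. Using linearity of the $H_s$ and the explicit pencil, $\{H_s,H_t\}_\lambda(q)=(s\,|\,P_\lambda(t)(q))$ splits into the current-algebra term $\int_{S^1}\bil{s}{\pal_x t}\,dx$, the Lie--Poisson term $\tfrac1\eps\int_{S^1}\bil{s}{[q,t]}\,dx$, and the $P_1$-term $\tfrac\lambda\eps\int_{S^1}\bil{s}{[a,t]}\,dx$. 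Invariance of the form turns the middle term into $-\tfrac1\eps(q\,|\,[s,t])=-\tfrac1\eps H_{[s,t]}(q)$, which is exactly what we want. The two remaining terms are the potential obstruction, and here the grading finishes the proof: $\bil{s}{\pal_x t}=0$ since $\pal_x t$ and $s$ both lie in $\nneg$ and $\bil{\nneg}{\nneg}=0$, while $\bil{s}{[a,t]}=0$ since $[a,t]\in\nneg$ by $a\in\g_{-2\kappa}$ and again $\bil{\nneg}{\nneg}=0$. Thus both obstruction terms vanish, the cocycle is zero, and $J$ is genuinely $\Ad^*$-equivariant. The main obstacle is precisely this last step: one must see that the Kac--Moody cocycle and the constant $P_1$-contribution, which generically spoil strict equivariance, are both annihilated by the special position of $a$ and the self-orthogonality of $\nneg$ in the Dynkin grading.
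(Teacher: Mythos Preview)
The paper does not supply its own proof of this proposition; it is stated with citations to \cite{mypaper} and \cite{Pedroni2} and then used as input for the reduction. Your argument is the standard one and is correct: linear Hamiltonians $H_s(q)=(q\,|\,s)$ with constant gradient $s$, the Hamiltonian vector field $P_\lambda(s)$ recovering the infinitesimal gauge action independently of $\lambda$, and the vanishing of both the Kac--Moody cocycle term $\int\bil{s}{\partial_x t}$ and the $P_1$-term $\int\bil{s}{[a,t]}$ by the self-orthogonality $\bil{\nneg}{\nneg}=0$ coming from the Dynkin grading.

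One small sharpening: for $t\in\lop\nneg$ and $a\in\g_{-2\kappa}$ you actually have $[a,t]=0$ outright, not merely $[a,t]\in\nneg$, since $-2\kappa$ is the lowest nonzero grade in the principal Dynkin grading and $\nneg\subset\oplus_{i\le -2}\g_i$; your weaker claim of course suffices for the pairing to vanish. The only loose thread is the $\eps$-bookkeeping in matching $P_\lambda(s)$ to the fundamental vector field of the gauge action as the paper writes it (with $\partial_x+q$ rather than $\eps\partial_x+q$); this is a normalization inconsistency in the paper's conventions rather than a gap in your reasoning.
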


We take $e$ as regular value of $J$. Then
\beq
S:=J^{-1}(e)=\lop\bneg+e,
\eeq
 since $\bneg$ is the orthogonal complement to $\nneg$. It follows from the Dynking grading that  the isotropy group  of $e$ is $\gauge$.

 Recall that the space $Q$ is defined as
\beq
Q:=e+\lop {\g^f}.
\eeq
The following proposition  identified $S/\gauge$  with the space $Q$. Which  allows us to  define the set $\mathcal{R} $ of functionals
on $Q$  as functionals on $\s$
which have densities in the ring $R$.
\begin{prop}\cite{DS}
The space $Q$
is a cross section for the action of $\gauge$ on $\s$, i.e for any element $q(x)+e\in \s$ there is a unique element $s(x) \in
\lop \nneg$ such that  \begin{equation}\label{gauge fix} z(x)+e=(\exp \ad~s(x))( \partial_x+q(x))-\partial_x \in Q.\end{equation} The entries of $z(x)$ are generators of the ring
$R$ of differential polynomials on $S$ invariant under the action of  $\gauge$.
\end{prop}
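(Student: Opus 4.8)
The plan is to solve the gauge-fixing condition grade by grade, using the Dynkin grading together with the decomposition $\bneg=\g^f\oplus\ad e(\nneg)$ recalled above. As all the spaces involved are homogeneous, this decomposition restricts to each graded component: for every even $j\leq 0$,
\beq
\g_j=(\g^f\cap\g_j)\oplus\ad e(\g_{j-2}),
\eeq
since $\ad e:\g_{j-2}\to\g_j$ is injective for $j\leq 0$ and, in the $\A$-module $\g$, its image is a complement of the lowest-weight vectors $\g^f\cap\g_j$. In particular $\g^f\cap\g_0=0$, so the grade-$0$ component of any element of $Q$ vanishes.

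Writing a generic point of $\s$ as $q+e$ with $q\in\lop{\bneg}$, I would expand the gauge transformation as
\beq
\exp(\ad s)(\partial_x+q+e)-\partial_x=e+q+\sum_{k\geq1}\frac{1}{k!}(\ad s)^{k-1}\big([s,q+e]-s_x\big),
\eeq
and decompose $s=\sum_{m\geq1}s_{-2m}$ with $s_{-2m}\in\lop{\g_{-2m}}$. The key point is the triangular structure in the grading: because $\ad s$ lowers the grade by at least two while $s_x$ and $[s,q]$ contribute only to strictly lower grades, the grade-$j$ component of the transformed element (for $j\leq 0$) equals $q_j+[s_{j-2},e]$ plus a term depending only on $q$ and on the components of $s$ of grade strictly greater than $j-2$. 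Requiring this component to lie in $\g^f\cap\g_j$ is, by the decomposition above, the same as cancelling its $\ad e(\g_{j-2})$-part; since $[s_{j-2},e]=-\ad e(s_{j-2})$ and $\ad e$ is injective on $\g_{j-2}$, this determines $s_{j-2}$ uniquely. Running the induction from $j=0$ down to the lowest grade produces a unique $s\in\lop{\nneg}$, which shows that $Q$ is a cross section. At each step $s_{j-2}$ is the solution of a linear equation whose right-hand side is a differential polynomial in $q$, so $s$, and therefore $z$, are differential polynomials in $q$.

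For the final claim, once the cross-section property holds each $\gauge$-orbit in $\s$ meets $Q$ in exactly one point. The entries of $z$, viewed as the functions on $\s$ sending $q+e$ to the coordinates of its unique representative in $Q$, are therefore $\gauge$-invariant, and by the previous paragraph they are differential polynomials, so they belong to $R$. Conversely, any $\gauge$-invariant differential polynomial $\phi$ satisfies $\phi(q+e)=\phi(z+e)$; its restriction to $Q$ is a differential polynomial in the coordinates of $Q$, which are precisely the entries of $z$, so $\phi$ is a differential polynomial in those entries. Hence the entries of $z$ generate $R$.

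I expect the main obstacle to be the second step: verifying that the grading genuinely makes the system triangular, so that the unknown $s_{j-2}$ enters the grade-$j$ equation only through the invertible operator $\ad e$ while every other contribution has already been fixed, and checking that the iterative solution stays within the ring of differential polynomials. Once this is in place, the cross-section property and the description of $R$ follow formally.
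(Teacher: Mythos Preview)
The paper does not supply its own proof of this proposition; it is stated with a citation to \cite{DS} and used as input for the subsequent computations. What the paper does do, immediately afterwards, is expand the gauge-fixing equation \eqref{gauge fix} in the normalized basis with an auxiliary parameter $\tau$ in order to read off the linear part of each invariant $z^i(x)$; that expansion is precisely the first step of the grade-by-grade recursion you describe.

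Your argument is correct and is essentially the original Drinfeld--Sokolov proof. The only point where I would tighten the exposition is the verification that $s_{j-2}$ enters the grade-$j$ equation solely through $[s_{j-2},e]$: you should note that in every other term of the expansion --- the $[s,q]$ piece, the $-s_x$ piece, and all higher iterates $(\ad s)^{k-1}(\cdot)$ with $k\geq 2$ --- the total drop in grade exceeds $2$ once $s_{j-2}$ is used, so those contributions land in grades strictly below $j$. This is exactly the ``triangular'' structure you flag as the main obstacle, and it goes through by the simple grading count $\sum m_i = 1-j/2$ forcing $k=1$ when one $m_i$ equals $1-j/2$. With that checked, the recursion, the differential-polynomial nature of $s$ and $z$, and the identification of $R$ as the differential polynomial ring in the entries of $z$ all follow as you outline.
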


The  Poisson pencil $P_\lambda$ \eqref{bih:stru on g} is reduced on
$Q$ using the following lemma.
\begin{lem}\label{ringInv}\cite{DS}
Let $\mathcal{R}$ be the functionals on $Q$ with densities belongs to $R$. Then $\mathcal{R}$ is a closed subalgebra with respect to the Poisson
pencil $P_\lambda$.
\end{lem}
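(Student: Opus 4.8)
The plan is to read the statement as the closure property of Drinfeld--Sokolov reduction and to deduce it from the Hamiltonian character of the $\gauge$-action recorded in Proposition \ref{DS as momentum }. First I would reinterpret $\mathcal R$ intrinsically: by the preceding cross-section proposition the functionals on $Q$ with densities in $R$ are exactly the local functionals on $\s=J^{-1}(e)$ whose densities are $\gauge$-invariant, that is, the genuinely $\gauge$-invariant local functionals on $\s$. Thus it suffices to prove that, for every $\lambda$, the bracket $\{\f,\mathcal G\}_\lambda$ of two $\gauge$-invariant local functionals is again $\gauge$-invariant and that its restriction to $\s$ depends only on the restrictions of $\f$ and $\mathcal G$; locality of the result is then the remaining point.

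For the invariance I would work entirely through the momentum map. For fixed $s\in\lop{\nneg}$ put $J_s(q)=\int_{S^1}\bil{J(q)}{s}\,dx$. By Proposition \ref{DS as momentum } the Hamiltonian vector field of $J_s$ with respect to $P_\lambda$ is the infinitesimal gauge generator $X_s$, and $\gauge$-invariance of a functional $\mathcal H$ is exactly the identity $\{J_s,\mathcal H\}_\lambda=X_s(\mathcal H)=0$ for all $s$. The Jacobi identity of $\{.,.\}_\lambda$ then gives
\[
X_s\big(\{\f,\mathcal G\}_\lambda\big)=\{J_s,\{\f,\mathcal G\}_\lambda\}_\lambda=\{\{J_s,\f\}_\lambda,\mathcal G\}_\lambda+\{\f,\{J_s,\mathcal G\}_\lambda\}_\lambda=0,
\]
so $\{\f,\mathcal G\}_\lambda$ is $\gauge$-invariant. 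The key point is that Proposition \ref{DS as momentum } supplies one and the same momentum map $J$ for every $\lambda$, so this argument reduces the whole pencil simultaneously.

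Next I would settle the well-definedness of the reduced bracket on $\s$. A functional on $Q$ extends to $\lop{\g}$ only up to a local functional $\mathcal N$ with $\mathcal N|_\s=0$, so I must check $\{\mathcal N,\mathcal G\}_\lambda|_\s=0$ for $\gauge$-invariant $\mathcal G$. This follows once the Hamiltonian flow of $\mathcal G$ is tangent to $\s$, which is immediate from equivariance: $\{\mathcal G,J_s\}_\lambda=-X_s(\mathcal G)=0$, so $\mathcal G$ preserves every component of $J$ and hence $\s=J^{-1}(e)$. Concretely, $\gauge$-invariance of $\f$ is equivalent to $(\eps\partial_x+\ad q)\,\delta\f(q)\in\lop{\bneg}$ for $q\in\s$; pairing $\delta\f$ against $P_\lambda$ applied to an arbitrary element of $\lop{\nneg}$ and integrating by parts then yields zero because $\lop{\bneg}$ and $\lop{\nneg}$ are orthogonal, and I would include this computation to exhibit the tangency explicitly.

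The last and most delicate step is locality: the reduced bracket of two densities in $R$ must again possess a density in $R$, i.e. be a differential polynomial rather than merely a smooth $\gauge$-invariant object. Here I would use the cross-section proposition in earnest. Since $P_\lambda$ is a first-order differential operator whose coefficients are affine in $q$, the bracket of two local functionals is again local; passing to $Q$ is carried out by the transformation $\exp(\ad s(x))$ with $s(x)\in\lop{\nneg}$ solved recursively along the Dynkin grading, so that $s(x)$, and with it the generators $z(x)$ of $R$, are differential polynomials. Substituting these and retaining only the restriction to $Q$ keeps the density inside $R$. I expect this preservation of the differential-polynomial structure to be the main obstacle: invariance and well-definedness are formal consequences of the momentum-map picture, whereas locality rests on the explicit triangular gauge-fixing and is the true content of the Drinfeld--Sokolov construction.
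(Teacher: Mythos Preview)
The paper does not supply its own proof of this lemma; it is quoted without argument from \cite{DS}. Your proposal reconstructs the standard Poisson-reduction argument underlying that reference and is correct in substance. The three steps you isolate---closure of $\gauge$-invariants under $\{.,.\}_\lambda$ via the Jacobi identity with the momentum components $J_s$, well-definedness of the restricted bracket via tangency of the Hamiltonian flow of an invariant functional to $\s=J^{-1}(e)$, and locality via the triangular differential-polynomial gauge fixing along the Dynkin grading---are exactly what the Drinfeld--Sokolov argument consists of.

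One refinement worth making explicit in the tangency step: $\gauge$-invariance of $\mathcal G$ is only assumed along $\s$, not on all of $\lop\g$, so the identity $\{\mathcal G,J_s\}_\lambda=0$ is available only at points of $\s$; but that is all you need, since tangency to $\s$ at points of $\s$ is the relevant condition. Your explicit computation showing $(\eps\partial_x+\ad q)\,\delta\f(q)\in\lop{\bneg}$ on $\s$ is the right way to see this directly. Note also that the additional term $[\lambda a,\delta\f]$ automatically lies in $\lop{\bneg}$ because $a\in\g_{-2\kappa}$ sits in the lowest Dynkin grade, so the same invariance condition handles the whole pencil at once---this is the concrete content behind your observation that a single momentum map $J$ serves for every $\lambda$.
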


Hence  $Q$ has a bihamiltonian structure $P_1^Q$ and $P_2^Q$ from $P_1$ and $P_2$, respectively. The reduced Poisson  structure $P_2^Q$ is called a \textbf{classical $W$-algebra}. For a formal definition of classical $W$-algebras see \cite{fehercomp}. We obtain  the reduced bihamiltonian structure by using lemma \ref{ringInv} as follows. We write the coordinates of $Q$ as differential polynomials in the coordinates of $S$ by means of  equation \eqref{gauge fix} and then apply the Leibnitz rule. For $u,v \in R$ the Leibnitz rule have the following form
\begin{equation}
\{u(x),v(y)\}_\lambda={\partial u(x)\over \partial (q_i^I)^{(m)}}\partial_x^m\Big({\partial v(y)\over \partial (q_j^J)^{(n)}} \partial_y^n\big(\{q_i^I(x),q_j^J(y)\}_\lambda\big)\Big)
\end{equation}

The generators of the invariant ring $R$ will have nice properties when we use  the  normalized basis we developed in last section. Let  us begin by writing  the equation of  gauge fixing \eqref{gauge fix} after introducing  a parameter $\tau$ as follows
\begin{eqnarray*}
q(x)+ e&=& \tau \sum_{i=1}^{r} \sum_{I=0}^{\eta_i} q_{i}^I X_{-I}^{i}+e\in \s\\
z(x)+e &=&\tau\sum_{i=1}^{r} z^i(x) X_{-\eta_i}^i+e\in Q\\
s(x)&=&\tau\sum_{i=1}^{r}\sum_{I=1}^{\eta_i} s_{i}^{I}(x) X_{-I}^{i} \in \lop\nneg.
\end{eqnarray*}
Then equation \eqref{gauge fix} expands to
\begin{equation}\label{gauge fixing data }
\begin{split}
\sum_{i=1}^{r} z^i(x) X_{-\eta_i}^i+&\sum_{i=1}^{r}\sum_{I=1}^{\eta_i}(\eta_i-I+1) s_{i}^{I} X_{-I+1}^{i}=\\&\sum_{i=1}^{r} \sum_{I=0}^{\eta_i} q_{i}^I(x) X_{-I}^{i}-\sum_{i=1}^{r}\sum_{I=1}^{\eta_i} \partial_x s_{i}^{I}(x) X_{-I}^{i}+\mathcal{O}(\tau).
\end{split}
\end{equation}
It  obvious that  any invariant $z^i(x)$ has the form
\begin{eqnarray}\label{leading terms1}
z^i(x)&=&q_i^{\eta_i}-\partial_x s_i^{\eta_i}+\mathcal{O}(\tau)\\\nonumber
&=& q_i^{\eta_i}(x)-\partial_x q_{i}^{\eta_i-1}+\mathcal{O}(\tau).
\end{eqnarray}
That is,  we obtained  the linear term of each invariant $z^i(x)$. Furthermore, since    $\bil {e}{f}=1$ then $z^1(x)$ has the expression
\beq
\begin{split}
z^1(x)= q_1^1(x) -\partial_x s_1^1+& \tau \bil {e}{[s_i^1(x) X_{-1}^i, q_{i}^0 X_0^i]}\\&+{1\over2}\tau \bil{e}{[s_i^1(x) X_{-1}^i,[s_i^I(x) X_{-1}^i,e]]}.
\end{split}
\eeq
Which is simplified by using the identity
\beq
[s_i^1(x) X_{-1}^i,[s_i^I(x) X_{-1}^i,e]]=-[s_i^1(x) X_{-1}^i,q_i^0(x) X_0^i]
\eeq
and
\beq
\bil {e}{[s_i^1(x) X_{-1}^i, q_{i}^0 X_0^i]}=-\bil {[s_i^1(x) X_{-1}^i,e]}{q_i^0(x) X_0^i}=(q_i^0(x))^2\bil{X_0^i}{ X_0^i}
\eeq
with $s^1_1(x)=q_1^0(x)$ to the expression
\begin{equation}
z^1(x)= q_1^1(x) -\partial_x q_1^0(x)+{1\over 2} \tau\sum_i (q_i^0(x))^2\bil{X_0^i}{ X_0^i}
\end{equation}
The invariant $z^1(x)$ is called  a  \textbf{Virasoro density} and the expression  above  agree with \cite{BalFeh}.

Our analysis will relay on the quasihomogeneity of the invariants  $z^i(x)$ in  the coordinates of $q(x)\in \lop \bneg$ and their derivatives. This property is summarized in the following corollary
\begin{cor}\label{lin inv poly}
If we assign degree $2 J+2l+2$ to  $\partial_x^l(q_i^J(x))$  then $z^i(x)$ will be quasihomogenous of degree $2\eta_i+2$. Furthermore, each invariant $z^i(x)$ depends linearly only on $q_i^{\eta_i}(x)$ and $\partial_x q_{i}^{\eta_i-1}(x)$. In particular, $z^i(x)$ with $i<n$ does not  depend on $\partial_x^l q^{\eta_r}_r(x)$ for any value $l$.
\end{cor}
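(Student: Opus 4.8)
By \eqref{leading terms1} the part of $z^i$ that is linear in the fields equals $q_i^{\eta_i}-\partial_x q_i^{\eta_i-1}$, which is exactly the second assertion: the only variables on which $z^i$ depends linearly are $q_i^{\eta_i}$ and $\partial_x q_i^{\eta_i-1}$, both carrying the index $i$. The real content is the quasihomogeneity, and the closing ``in particular'' statement will then drop out by a weight count. My plan is to realize the grading $\deg\partial_x^l q_i^J=2J+2l+2$ as a genuine scaling symmetry of the gauge fixing \eqref{gauge fix} and to transport it to the invariants by uniqueness.

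Concretely, for a scalar $c$ I would act on the Lax operator $\partial_x+q+e$ by three commuting operations: conjugation by the constant group element $\exp(-(\ln c)\,h)$, the reparametrization $x\mapsto c^2x$, and multiplication by $c^2$. Using $\ad h\,X_I^i=2IX_I^i$ from \eqref{sl2relation} together with $e\in\g_2$, one checks that $\partial_x$ and $e$ are both fixed, whereas the field is sent to $\tilde q$ with $\tilde q_i^J(x)=c^{2J+2}q_i^J(c^2x)$, so that $\partial_x^l\tilde q_i^J(x)=c^{2J+2l+2}(\partial_x^l q_i^J)(c^2x)$, which is precisely the declared weight. Since the transformation is built from a constant gauge conjugation, a diffeomorphism of $S^1$, and an overall rescaling, it should carry the gauge-fixing identity $\exp(\ad s)(\partial_x+q+e)-\partial_x=e+\sum_i z^i X_{-\eta_i}^i$ into the \emph{same} identity for $\tilde q$, with generator $\tilde s_i^I(x)=c^{2I}s_i^I(c^2x)\in\lop\nneg$ and invariants $\tilde z^i(x)=c^{2\eta_i+2}z^i(c^2x)$. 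By the uniqueness in the cross-section property of $Q$ (\cite{DS}), the invariant computed from $\tilde q$ must coincide with $\tilde z^i$, i.e.\ $z^i[\tilde q](x)=c^{2\eta_i+2}z^i[q](c^2x)$ for every $c$; expanding $z^i$ into monomials in $\partial_x^l q_j^J$ this says exactly that each monomial has weight $2\eta_i+2$, which is the first assertion.

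Granting quasihomogeneity, the last assertion is then a weight estimate. Every coordinate carries weight $\deg q_j^J=2J+2\ge2$, and differentiation only raises the weight, so each factor $\partial_x^l q_j^J$ has strictly positive weight; hence any monomial containing $\partial_x^l q_r^{\eta_r}$ has weight at least $2\eta_r+2+2l\ge 2\kappa+2$. On the other hand, for $i<n=r$ we have $\eta_i\le\eta_{r-1}<\eta_r=\kappa$, so $z^i$ is homogeneous of weight $2\eta_i+2<2\kappa+2$. No monomial of $z^i$ can therefore contain a factor $\partial_x^l q_r^{\eta_r}$, which is precisely the independence claim.

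I expect the crux to be the middle step: checking that the prescribed composition of conjugation, reparametrization and scalar multiplication genuinely fixes $e$ and $\partial_x$ and sends the transformed generator back into $\lop\nneg$, so that it maps one gauge-fixing problem to another and the uniqueness of the cross section can legitimately be invoked to push homogeneity from the equation onto its solution $z^i$. Once this covariance of the Lax operator is in place, the two homogeneity statements are pure bookkeeping with the grading.
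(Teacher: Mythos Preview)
Your argument is correct. The paper actually states this corollary without proof, treating the quasihomogeneity as a direct consequence of the graded structure of the gauge-fixing equation \eqref{gauge fixing data} together with the linear leading term \eqref{leading terms1}; your scaling-symmetry realization of the grading via $\Ad(\exp(-(\ln c)h))$, rescaling by $c^2$, and reparametrization makes this explicit and then invokes uniqueness of the cross section, which is a clean and standard way to push homogeneity from the equation onto its solution. The weight count for the final ``in particular'' clause is exactly right once you note, as you do, that every jet variable carries strictly positive weight and that $\eta_{r-1}<\eta_r=\kappa$.

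One small technical point worth tightening: the substitution $x\mapsto c^2x$ is not a diffeomorphism of $S^1$ for general $c$, so calling it one is inaccurate. This does not damage the argument, because the invariants $z^i$ are \emph{differential polynomials} in the $q_j^J$ (elements of the ring $R$), determined by solving \eqref{gauge fixing data} level by level purely algebraically; the scaling identity $z^i[\tilde q](x)=c^{2\eta_i+2}z^i[q](c^2x)$ therefore holds as an identity of formal differential polynomials, independently of any global circle geometry. Equivalently, you can differentiate in $c$ at $c=1$ and phrase the symmetry infinitesimally as an Euler vector field on the jet variables, avoiding the reparametrization issue altogether. With that caveat addressed, the covariance check you flag as the crux goes through exactly as you outline: $\Ad(g)$ preserves the Dynkin grading, hence sends $\lop\nneg$ to itself and $\lop\g^f$ to itself, while the combined operation fixes both $e$ and $\partial_x$, so the transformed data again satisfy \eqref{gauge fix} and uniqueness applies.
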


Let us fix the following notations for the leading terms of the Drinfeld-Sokolov bihamiltonian structure on $Q$
\begin{eqnarray} \{z^i(x),z^j(y)\}_1^Q&=&
\sum_{k=-1}^\infty \epsilon^k \{z^i(x),z^j(y)\}^{[k]}_1 \\\nonumber
 \{z^i(x),z^j(y)\}_2^Q&=&
\sum_{k=-1}^\infty \epsilon^k \{z^i(x),z^j(y)\}^{[k]}_2.
 \end{eqnarray}
 where
\begin{eqnarray}\label{reducedPB notations}
  \{z^i(x),z^j(y)\}^{[-1]}_1 &=& F^{ij}_1(z(x))\delta(x-y) \\\nonumber
  \{z^i(x),z^j(y)\}^{[0]}_1 &=& g^{ij}_{1}(z(x)) \delta' (x-y)+ \Gamma_{1k}^{ij}(z(x)) z_x^k \delta (x-y)\\\nonumber
   \{z^i(x),z^j(y)\}^{[-1]}_2 &=& F^{ij}_2(z(x))\delta(x-y) \\\nonumber
  \{z^i(x),z^j(y)\}^{[0]}_2 &=& g^{ij}_{2}(z(x)) \delta' (x-y)+ \Gamma_{2k}^{ij}(z(x)) z_x^k \delta (x-y)
\end{eqnarray}

\subsection{The nondegeneracy  condition}

In this section we find the antidiagonal entries of the matrix $g^{ij}_1$ with respect to the  exponents of $\g$, i.e the entry $g^{ij}_1$ with $\eta_i+\eta_j=\kappa+1$. Our goal is to prove  this matrix is nondegenerate. 

Let $\Xi_I^i$ denote the value $\bil {X_I^i}{X_I^i}$ and we set \[[a,X_I^i]=\sum_j \Delta_I^{ij} X_{I-\eta_r}^j.\] By definition,  for a functional $\f$ on $\g$
\beq \delta \f(x)=\sum_i\sum_{I=0}^{\eta_i} {1\over \Xi_I^i}{\delta\f\over\delta q_i^I(x)} X_I^i\eeq
and  the Poisson brackets of two functionals $\I$ and $\f$ on $\g$ reads
\beq \{\I,\f\}_1=\bil {\delta \I(x)}{[a,\delta \f(x)]}=\sum_i\sum_{I=0}^{\eta_i}\sum_j {\Delta_I^{ij}\over \Xi_I^i}{\delta \I\over \delta q_j^{\kappa-I}(x)}{\delta\f\over\delta q_i^I(x)}.\eeq
Therefore, the Poisson brackets in coordinates have the form
\beq
\{q_j^{\kappa-I}(x),q_i^I(y)\}_1= {\Delta_I^{ij}\over \Xi_I^i}\delta(x-y).
\eeq

Recall that the  Poisson bracket $\{v(x),u(y)\}_1^Q$ of elements   $u,~v\in R$ is obtained  by  the Leibnitz rule which expands as
{\small \begin{eqnarray*}
& &\{ v(x),u(y)\}_1^Q= \sum_{i,I;j}\sum_{l,h}{\Delta_I^{ij}\over \Xi_I^i}{\partial v(x) \over \partial (q_j^{\kappa-I})^{(l)} }\partial_x^l\Big({\partial u(y) \over \partial (q_i^{I})^{(h)} }\partial_y^h(\delta(x-y))\Big)\\\nonumber
&=&  \sum_{i,I;j}\sum_{l,h,m,n} (-1)^h{h\choose m} {l\choose n}{\Delta_I^{ij}\over \Xi_I^i}{\partial v(x) \over \partial (q_j^{\kappa-I})^{(l)} }\Big({\partial u(x) \over \partial (q_i^{I})^{(h)} }\Big)^{m+n}\delta^{h+l-m-n}(x-y).
\end{eqnarray*}}
Here  we omitted the ranges of the indices  since no confusion can arise. Let $\mathbb{A}(v,u)$ denote the coefficient of $\delta'(x-y)$
\begin{equation}\label{formula for dispesionless limit}\mathbb{A}(v,u) =\sum_{i,I,J}\sum_{h,l}(-1)^h (l+h) {\Delta_I^{ij}\over \Xi_I^i}{\partial v(x) \over \partial (q_j^{\kappa-I})^{(l)} }\Big({\partial u(x) \over \partial (q_i^{I})^{(h)} }\Big)^{h+l-1}
\end{equation}
 Obviously, we  obtain the entry $g^{ij}_1$  from $\mathbb{A}(z^i,z^j)$.

 \begin{lem}  \label{homg of g1} If $\eta_i+\eta_j<\kappa+1$ then
 $\mathbb{A}(z^i,z^j)=0$. In particular, the matrix $g^{ij}_1$ is lower antidiagonal with respect to the exponents of $\g$ and the antidiagonal entries are constants.
 \end{lem}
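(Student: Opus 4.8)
The plan is to prove the lemma by a single weight (quasihomogeneity) count applied to the formula \eqref{formula for dispesionless limit} for $\mathbb{A}(z^i,z^j)$, using the grading of Corollary \ref{lin inv poly}. Recall that under the assignment $\deg \partial_x^l q_i^J = 2J+2l+2$ each invariant $z^i$ is quasihomogeneous of degree $2\eta_i+2$. Since $\mathbb{A}(z^i,z^j)$ is a differential polynomial in the $q_i^J$ and their $x$-derivatives, it decomposes into quasihomogeneous pieces, and I would compute the weight of the monomials occurring in it.

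First I would record that the coefficients $\Delta_I^{ij}$ and $\Xi_I^i$ are scalars: by their definitions they are, respectively, the structure constants of $\ad a$ in the normalized basis and the numbers $\bil{X_I^i}{X_I^i}$, so they carry degree $0$, as does the combinatorial factor $(l+h)$. Then I would track the degrees of the two differential factors in \eqref{formula for dispesionless limit}. The factor $\partial z^i/\partial (q_j^{\kappa-I})^{(l)}$ has degree $(2\eta_i+2)-\big(2(\kappa-I)+2l+2\big)=2\eta_i-2\kappa+2I-2l$, while the factor $\partial_x^{h+l-1}\big(\partial z^j/\partial (q_i^{I})^{(h)}\big)$ has degree $\big[(2\eta_j+2)-(2I+2h+2)\big]+2(h+l-1)=2\eta_j-2I+2l-2$. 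Adding these, all dependence on the summation indices $I,l,h$ cancels, so every term of $\mathbb{A}(z^i,z^j)$ carries the single degree $2(\eta_i+\eta_j)-2(\kappa+1)$. This index-independent cancellation is the crux of the argument and the step that requires the most careful bookkeeping.

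Having shown that $\mathbb{A}(z^i,z^j)$ is quasihomogeneous of degree $2(\eta_i+\eta_j)-2(\kappa+1)$, I would finish with the elementary observation that every generator $\partial_x^l q_i^J$ has degree at least $2$, so a nonzero monomial in the $q_i^J$ and their derivatives has strictly positive degree, while a quasihomogeneous differential polynomial of degree $0$ must be a constant. Hence, when $\eta_i+\eta_j<\kappa+1$ the degree is negative and $\mathbb{A}(z^i,z^j)$ must vanish, which proves the first assertion. For the remaining statement, recall that $g^{ij}_1$ is extracted from $\mathbb{A}(z^i,z^j)$; the vanishing just established gives $g^{ij}_1=0$ whenever $\eta_i+\eta_j<\kappa+1$, i.e. $g^{ij}_1$ is lower antidiagonal with respect to the exponents, and on the antidiagonal $\eta_i+\eta_j=\kappa+1$ the degree equals $0$, forcing $\mathbb{A}(z^i,z^j)$, and therefore the antidiagonal entries of $g^{ij}_1$, to be constants.
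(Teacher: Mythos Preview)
Your proof is correct and follows essentially the same approach as the paper: both arguments compute the quasihomogeneous degree of $\mathbb{A}(z^i,z^j)$ using the grading of Corollary~\ref{lin inv poly} and conclude from the resulting degree $2(\eta_i+\eta_j)-2(\kappa+1)$ that the expression vanishes when this is negative and is constant when it is zero. You supply more explicit bookkeeping for the degree of each factor in \eqref{formula for dispesionless limit}, whereas the paper simply asserts that $\mathbb{A}(v,u)$ has degree $\theta+\xi-(2\kappa+2)-4$ when $v,u$ have degrees $\theta,\xi$; but the substance is the same.
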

\begin{proof}
We note that if  $v(x)$ and $u(x)$ are in $R$ and quasihomogenous of  degree $\theta$ and $\xi$, respectively, then $\mathbb{A}(v,u)$ will be quasihomogenous of degree \[ \theta+\xi-(2\kappa+2)-4.\]
The proof is complete by observing that the  generators $z^i(x)$ of the ring $R$  is quasihomogeneous  of degree $2\eta_i+2$.
\end{proof}
\begin{prop}\label{nondeg}
The matrix $g^{ij}_1$ is nondegenerate and its determinant is equal to the determinant of the matrix $A_{ij}$ defined in \eqref{Gold}.
\end{prop}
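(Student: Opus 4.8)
The plan is to compute $\det g^{ij}_1$ by reducing it to the antidiagonal entries, evaluating those entries explicitly from the linear parts of the invariants, and recognizing them as the matrix $A_{ij}$ of Proposition \ref{Gold}, whose nondegeneracy is already known.

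First I would exploit the structure supplied by Lemma \ref{homg of g1}. Summing the relation $\eta_i+\eta_{r-i+1}=\kappa+1$ over $i$ gives $2\sum_i\eta_i=r(\kappa+1)$. A permutation $\pi$ contributes to $\det g_1$ only if $g^{i\pi(i)}_1\neq 0$ for all $i$, which by the lemma forces $\eta_i+\eta_{\pi(i)}\geq \kappa+1$ for every $i$; but $\sum_i(\eta_i+\eta_{\pi(i)})=2\sum_i\eta_i=r(\kappa+1)$, so in fact $\eta_i+\eta_{\pi(i)}=\kappa+1$ for all $i$. Hence only the antidiagonal (constant) entries enter the determinant. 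Since $A_{ij}$ is supported on exactly the same antidiagonal block (Proposition \ref{Gold}), $\det g_1$ and $\det A$ are sums over the identical set of permutations of $\mathrm{sgn}(\pi)\prod_i(\text{antidiagonal entry})$; thus it suffices to prove the antidiagonal entries coincide, and then $\det g_1=\det A\neq 0$. This argument is uniform and covers the $D_{2n}$ case, where the repeated exponent merely produces a $2\times 2$ antidiagonal block.

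Next I would compute $g^{ij}_1=\mathbb{A}(z^i,z^j)$ for $\eta_i+\eta_j=\kappa+1$ from \eqref{formula for dispesionless limit}. Because this entry is quasihomogeneous of degree $0$, hence constant, only the linear parts $z^i=q_i^{\eta_i}-\partial_x q_i^{\eta_i-1}$ of the invariants (Corollary \ref{lin inv poly}) contribute. Substituting, exactly two index combinations survive: the $q_j^{\eta_j}$ term of $z^j$ paired with the $-\partial_x q_i^{\eta_i-1}$ term of $z^i$, and the $-\partial_x q_j^{\eta_j-1}$ term paired with $q_i^{\eta_i}$; in both the summation index $I$ lies in $\{\eta_j-1,\eta_j\}$, matching the range forced by the other factor precisely because $\eta_i+\eta_j=\kappa+1$. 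Tracking the factor $(-1)^h(l+h)$, the coefficients $\pm 1$ of the linear invariants, and the self-pairings $\Xi^j_{\eta_j}=\bil{X^j_{\eta_j}}{X^j_{-\eta_j}}=-1$ and $\Xi^j_{\eta_j-1}=\bil{X^j_{\eta_j-1}}{X^j_{-(\eta_j-1)}}=2\eta_j$ read off from \eqref{sl2bilinear}, I obtain
\[
g^{ij}_1=\Delta^{ji}_{\eta_j}+\frac{\Delta^{ji}_{\eta_j-1}}{2\eta_j}.
\]
Then I would convert the two $\Delta$'s into invariant pairings. Writing $[a,X^j_{\eta_j}]=\sum_k\Delta^{jk}_{\eta_j}X^k_{\eta_j-\kappa}$ with $\eta_j-\kappa=-(\eta_i-1)$, and using $\bil{X^i_{-(\eta_i-1)}}{X^i_{\eta_i-1}}=2\eta_i$ and $\bil{X^i_{-\eta_i}}{X^i_{\eta_i}}=-1$ from \eqref{sl2bilinear}, gives $\Delta^{ji}_{\eta_j}=\bil{[a,X^j_{\eta_j}]}{X^i_{\eta_i-1}}/(2\eta_i)$ and $\Delta^{ji}_{\eta_j-1}=-\bil{[a,X^j_{\eta_j-1}]}{X^i_{\eta_i}}$; invariance of the form rewrites the latter as $\bil{[a,X^i_{\eta_i}]}{X^j_{\eta_j-1}}$, whence
\[
g^{ij}_1=\frac{\bil{[a,X^j_{\eta_j}]}{X^i_{\eta_i-1}}}{2\eta_i}+\frac{\bil{[a,X^i_{\eta_i}]}{X^j_{\eta_j-1}}}{2\eta_j}=A_{ij},
\]
the last equality being exactly the identity of Proposition \ref{Gold}.

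The main obstacle is the bookkeeping in \eqref{formula for dispesionless limit}: isolating which of the four a priori possible index combinations actually occur, and carrying every sign correctly through the combinatorial factor $(-1)^h(l+h)$, the derivative coefficients $\pm 1$ in the linear invariants, and the self-pairings $\Xi$. Once this is executed, the identification $g^{ij}_1=A_{ij}$ on the antidiagonal is forced, and the determinant comparison of the first step is then routine linear algebra.
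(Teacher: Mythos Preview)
Your proposal is correct and follows essentially the same approach as the paper: both compute the antidiagonal entries of $g^{ij}_1$ by isolating the two surviving index combinations in $\mathbb{A}(z^i,z^j)$ coming from the linear parts \eqref{leading terms1}, evaluate them via the normalizations \eqref{sl2bilinear}, and identify the result with $A_{ij}$ through the identity of Proposition~\ref{Gold}. Your permutation/pigeonhole argument for why only the antidiagonal block contributes to the determinant (and the explicit remark on the $D_{2n}$ repeated-exponent case) is a small refinement over the paper, which simply invokes the lower-antidiagonal shape from Lemma~\ref{homg of g1}, but the substance is the same.
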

\begin{proof}
From the last lemma  we need only to consider the expression  $\mathbb{A}(z^n,z^m)$ with $\eta_n+\eta_m=\kappa+1$. Here
 \begin{equation}\mathbb{A}(z^n,z^m) =\sum_{i,I,J}\sum_{h,l}(-1)^h (l+h) {\Delta_I^{ij}\over \Xi_I^i}{\partial z^n(x) \over \partial (q_j^{\kappa-I})^{(l)} }\Big({\partial z^m(x) \over \partial (q_i^{I})^{(h)} }\Big)^{h+l-1}
\end{equation}
where  $z^m$ and $z^n$ are quasihomogenous of degree $2\eta_m+2$ and $2\kappa-2\eta_m+4$, respectively. The expression ${\partial z^m(x) \over \partial (q_i^{I})^{(h)} }$ gives the constrains
 \begin{eqnarray}
 2I+2\leq 2\eta_m+2 \\\nonumber
 2\kappa-2I+2 \leq 2\kappa-2 \eta_m+4
 \end{eqnarray}
which implies \[ \eta_m-1\leq I\leq \eta_m\]
Therefore the only possible values for the index $I$ in the expression of $\mathbb{A}(z^n,z^m)$ that make sense are $\eta_m$ and $\eta_m-1$. Consider  the partial summation of $ \mathbb{A}(z^n,z^m)$ when  $I=\eta_m$. The degree of $z^m$ yields $h=0$ and that $z^m$ depends linearly on $q_i^{\eta_m}$.  But then equation \eqref{leading terms1} implies $i$ is fixed and equals to $m$. A similar argument on $z^n(x)$ we find that the indices $l$ and $j$ are fixed and equal to $1$ and $n$, respectively. But then the  partial summation when $I=\eta_m$ gives the value
 \[{\Delta_{\eta_m}^{mn}\over \Xi_{\eta_m}^m}{\partial z^n(x) \over \partial (q_n^{\kappa-\eta_m})^{(1)} }{\partial z^m(x) \over \partial (q_m^{\eta_m})^{(0)}} =-{\Delta_{\eta_m}^{mn}\over \Xi_{\eta_m}^m}.\]
We now turn to  the partial summation of $ \mathbb{A}(z^n,z^m)$ when  $I=\eta_m-1$.  The  possible values for $h$ are $1$ and $0$. When $h=0$  we get zero since $l$ and $h$ can only be zero. When $h=1$ we get, similar to the above calculation,  the value
 \[
(-1)  {\Delta_{\eta_m-1}^{mn}\over \Xi_I^i}{\partial z^n(x) \over \partial (q_n^{\kappa-\eta_m})^{(0)} }{\partial z^m(x) \over \partial (q_m^{\eta_m-1})^{(1)} }={\Delta_{\eta_m-1}^{mn}\over \Xi_{\eta_m-1}^m}.
\]  Hence we end with the expression \begin{eqnarray*}
\mathbb{A}(z^n,z^m)&=& {\Delta_{\eta_m-1}^{mn}\over \Xi_{\eta_m-1}^m}-{\Delta_{\eta_m}^{mn}\over \Xi_{\eta_m}^m}\\\nonumber
&=&{\bil {[a,X_{\eta_n}^n]}{X_{\eta_m-1}^m}\over 2\eta_m}+ {\bil {[a,X_{\eta_m}^m]}{X_{\eta_n-1}^n}\over 2\eta_n}=A_{mn}
\end{eqnarray*}
where we derive the last equality in proposition \ref{Gold}. Hence the    determinate of $g^{ij}_1$  equals to the determinant of $A_{mn}$ which  is nondegenerate.
\end{proof}

\subsection{Differential relation}

We want to observe a differential relation between the first and the second Poisson brackets. This relation  is a consequence of the   fact that $z^r(x)$  is the only  generator of the ring $R$ which depends explicitly on $q_r^{\kappa}(x)$ and  this dependence is linear.

\begin{prop}\label{diff relaton} The entries of matrices of the reduced bihamiltonian structure on $Q$ satisfy the relations
\begin{eqnarray}
\partial_{z^r} F^{ij}_2 &=& F^{ij}_1\\\nonumber
\partial_{z^r} g^{ij}_2 &=& g^{ij}_1
\end{eqnarray}
\end{prop}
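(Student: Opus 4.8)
The plan is to exploit the fact that the base field $q$ enters the second Poisson tensor \eqref{bih:stru on g} only through the term $q$ inside $\tfrac1\eps[\eps\pal_x+q(x),\,\cdot\,]$, together with the special role of the coordinate $z^r$, which up to lower-order data is the component of $q$ along the line $\g_{-2\kappa}=\Complex\,a$. Indeed, in the expansion $q(x)=\sum_{i}\sum_{I=0}^{\eta_i}q_i^I(x)\,X_{-I}^i$ the $\g_{-2\kappa}$-component of $q$ is $q_r^{\kappa}\,a$ (with $\kappa=\eta_r$), so $a$ is precisely the coefficient vector of $q_r^{\kappa}$. Since $a$ is a constant element, evaluating $P_2$ at the translated field $q+\lambda a$ gives $[\eps\pal_x+q+\lambda a,\,v]=[\eps\pal_x+q,\,v]+\lambda[a,v]$, so the value of $P_2$ at $q+\lambda a$ equals the value of the pencil $P_\lambda=P_2+\lambda P_1$ at $q$. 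This is the identity underlying the compatibility of $P_1$ and $P_2$, now read as an infinitesimal symmetry relating the two brackets.

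First I would translate the constant translation $T_\lambda\colon q\mapsto q+\lambda a$ into the reduced coordinates. By Corollary \ref{lin inv poly} and \eqref{leading terms1}, $z^r$ is the only generator of $R$ depending on $q_r^{\kappa}$, and quasihomogeneity forces this dependence to be the bare linear term $q_r^{\kappa}$ with coefficient $1$: the monomial $q_r^{\kappa}$ already carries the degree $2\eta_r+2$ of $z^r$, whereas any product of $q_r^{\kappa}$ with a further generator, or any derivative $\pal_x^{\,l}q_r^{\kappa}$ with $l\ge 1$, has strictly larger degree. Since $T_\lambda$ shifts $q_r^{\kappa}$ by the constant $\lambda$ and fixes every other coordinate together with all $x$-derivatives, I would conclude $z^r\circ T_\lambda=z^r+\lambda$ and $z^i\circ T_\lambda=z^i$ for $i\neq r$. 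In particular $T_\lambda$ preserves the invariant ring $R$ and descends to $Q$ as the pure coordinate shift $z^r\mapsto z^r+\lambda$.

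I would then combine these two readings of $T_\lambda$ through the naturality of the reduced bracket. Write $\mathcal B_\alpha^{ij}[z]$ for the full differential-polynomial expression of $\{z^i(x),z^j(y)\}^Q_\alpha$, whose $\eps^{-1}\delta(x-y)$- and $\eps^{0}\delta'(x-y)$-coefficients are the functions $F^{ij}_\alpha$ and $g^{ij}_\alpha$ of \eqref{reducedPB notations}. Consider the pushforward $(T_\lambda)_*P_2$. On one hand, naturality of the bracket under $T_\lambda$, together with the constant shifts $z^r\circ T_\lambda=z^r+\lambda$ and $z^i\circ T_\lambda=z^i$ for $i\neq r$ (which the bracket ignores), gives $\{z^i,z^j\}_{(T_\lambda)_*P_2}=\mathcal B_2^{ij}[z]$ with $z^r$ lowered by $\lambda$. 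On the other hand, because $a$ is constant, $(T_\lambda)_*P_2=P_2-\lambda P_1$, so the same bracket equals $\mathcal B_2^{ij}[z]-\lambda\,\mathcal B_1^{ij}[z]$. Equating the two and differentiating at $\lambda=0$ collapses everything to the single identity $\pal_{z^r}\mathcal B_2^{ij}=\mathcal B_1^{ij}$; reading off the coefficients of $\delta(x-y)$ and of $\delta'(x-y)$ then yields the asserted relations $\pal_{z^r}F^{ij}_2=F^{ij}_1$ and $\pal_{z^r}g^{ij}_2=g^{ij}_1$.

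The hard part will be the middle step: certifying that the translation of fields descends to the clean coordinate shift $z^r\mapsto z^r+\lambda$ and that the reduced Poisson bracket is natural with respect to it. This rests entirely on the uniqueness and the strict linearity of the $q_r^{\kappa}$-dependence of $z^r$ supplied by Corollary \ref{lin inv poly}, and on the normalization of $a$, which pins the coefficient to $1$ and hence the constant $1$ appearing in both relations. The only remaining labour is a careful bookkeeping of signs, in which the sign from the pushforward $(T_\lambda)_*P_2=P_2-\lambda P_1$ cancels against the sign from lowering $z^r$ by $\lambda$, reproducing the stated positive sign.
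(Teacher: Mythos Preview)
Your proof is correct and takes a more conceptual route than the paper's. The paper argues directly at the level of the Leibniz rule: it isolates the terms of $\{.,.\}_2$ on $\lop\g$ that depend explicitly on $q_r^\kappa$, observes that these are precisely the structure constants ${\Delta_I^{ij}/\Xi_I^i}$ entering $\{.,.\}_1$, and then tracks them through the Leibniz expansion to see that $\partial_{q_r^\kappa}$ of the $\delta(x-y)$ and $\delta'(x-y)$ coefficients of the second reduced bracket reproduce those of the first. Your argument instead packages the same content as a symmetry: the constant shift $q\mapsto q+\lambda a$ realizes the pencil as a one-parameter family of pushforwards of $P_2$, and Corollary~\ref{lin inv poly} together with \eqref{leading terms1} shows this shift descends to the pure translation $z^r\mapsto z^r+\lambda$ on $Q$, so naturality of the reduction gives the identity $\partial_{z^r}\{z^i,z^j\}_2^Q=\{z^i,z^j\}_1^Q$ at all orders in $\eps$ at once. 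The paper's approach is more hands-on and displays the mechanism inside the Leibniz formula; yours is cleaner, explains the relation as the infinitesimal form of the shift symmetry underlying the compatibility of $P_1$ and $P_2$, and actually proves the stronger statement valid term by term in the $\eps$-expansion. The one point worth making more explicit is why the reduction commutes with $T_\lambda$: beyond the coordinate verification via Corollary~\ref{lin inv poly}, the underlying reason is that $a\in\g_{-2\kappa}$ commutes with all of $\nneg$, so $\exp(\ad s)\,a=a$ and the gauge-fixing map \eqref{gauge fix} intertwines $T_\lambda$ on $S$ with $T_\lambda$ on $Q$.
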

\begin{proof}
The fact that we calculate the reduced Poisson structure  by using Leibnitz rule and $z^r(x)$  depends on $q_r^{\kappa}(x)$  linearly, means that the invariant  $z^r(x)$ will appear on the reduced Poisson bracket $  \{z^i(x),z^j(y)\}_2^Q$ only as a result of the following ``brackets''
\begin{equation}
[q_j^{\kappa-I}(x),q_i^I(y)]:=q_r^{\kappa}(x) {\Delta^{ij}_I\over \Xi_I^i} \delta(x-y)
\end{equation}
which are the terms of the second Poisson bracket on $\lop \g$ depending explicitly on $q_r^{\kappa}(x)$.
 We expand the ``brackets'' $[z^i(x),z^j(y)]$ by imposing the Leibnitz rule. We find the coefficient of $\delta(x-y)$ and $\delta'(x-y)$  are, respectively,
\begin{eqnarray}
\mathbb{B}&=& \sum_{i,I,J}\sum_{h,l}(-1)^h {\Delta_I^{ij}\over \Xi_I^i}q_r^{\kappa}(x){\partial z^i(x) \over \partial (q_j^{\kappa-I})^{(l)} }\Big({\partial z^j(x) \over \partial (q_i^{I})^{(h)} }\Big)^{h+l}\\\nonumber
\mathbb{D}&=&\sum_{i,I,J}\sum_{h,l}(-1)^h (l+h) {\Delta_I^{ij}\over \Xi_I^i}q_r^{\kappa}(x){\partial z^i(x) \over \partial (q_j^{\kappa-I})^{(l)} }\Big({\partial z^j(x) \over \partial (q_i^{I})^{(h)} }\Big)^{h+l-1}
\end{eqnarray}
Obviously,  We have $\partial_{z^r} F^{ij}_2$ from $\partial_{q_r^{\kappa}} \mathbb{B}$   and  $\partial_{z^r} g^{ij}_2$ from  $\partial_{q_r^{\kappa}} \mathbb{D}$. But  we see that $\partial_{q_r^{\kappa}} \mathbb{D}$ is just the coefficient $\mathbb{A}(z^i,z^j)$ of $\delta'(x-y)$ of $\{z^i(x),z^j(y)\}_1^Q$. This prove  that \[\partial_{z^r} g^{ij}_2 = g^{ij}_1.\] A similar argument show that
\[\partial_{z^r} F^{ij}_2 = F^{ij}_1.\]
\end{proof}

\section{Some results from Dirac reduction}

We recall  that the Poisson bracket $\{.,.\}_2^Q$ can be obtained by performing the Dirac reduction of $\{.,.\}_2$ on $Q$. We derive from this some  facts  concerning   the dispersionless limit of the bihamiltonian structure on $Q$.  Let $\gdim$ denote   the dimension of $\g$.

Let  $\xi_I, ~I=1,\ldots,\gdim$  be a total order of  the basis ${X^i_I}$ such that
\begin{enumerate}
\item The first $ r $ are   \beq X^1_{-\eta_1}<X^2_{-\eta_2}<\ldots<X^r_{-\eta_r}\eeq
\item The matrix
\beq
\bil {\xi_I}{\xi_J},~ I,J=1,\ldots,\gdim
\eeq
is antidiagonal.
\end{enumerate}
Let $\xi_I^*$ denote  the dual basis of $\xi_I$ under $\bil . .$. Note that  if $\xi_I \in \g_{\mu}$ then $\xi_I^* \in \g_{-\mu}$.

We extend the coordinates on $Q$ to all $\lop \g$ by setting
\begin{equation}
\q^I(b(x)):=\bil{b(x)-e}{\xi^*_I},~ I=1,\ldots, \gdim.
\end{equation}
Let us fix  the following notations for the structure constants  and the bilinear form on $\g$
\begin{equation}
[\xi_I^*,\xi_J^*]:=\sum c^{IJ}_K \xi_K^*,~~~\widetilde{g}^{IJ}=\bil{\xi_I^*}{\xi_J^*}.
\end{equation}
Now consider the  following matrix differential operator
\begin{equation}\label{bih operator}
\mathbb{F}^{IJ}=\epsilon \widetilde{g}^{IJ}\partial_x+ \widetilde{F}^{IJ}.
\end{equation}
Here \[\widetilde F^{IJ}=\sum_{K}\big( c^{IJ}_K \q^K(x)\big).\]
 Then the Poisson brackets of  $P_2$  will have the form
\begin{equation}
\{\q^{I}(x),\q^J(y)\}_2=\mathbb{F}^{IJ}{1\over \epsilon}\delta(x-y).
\end{equation}

\begin{prop} \cite{BalFeh}\label{dirac red}
The second Poisson bracket $\{.,.\}_2^Q$ can be obtained by performing  Dirac reduction of $\{.,.\}_2$ on $Q$.
\end{prop}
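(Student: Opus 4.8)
The plan is to exhibit $Q$ as a submanifold of $\lop\g$ cut out by second-class constraints and to identify the Dirac bracket of $\{.,.\}_2$ on it with the Hamiltonian reduction that produced $\{.,.\}_2^Q$. In the coordinates $\q^I$ the slice $Q=e+\lop{\g^f}$ is exactly the locus $\q^I=0$ for $I=r+1,\ldots,\gdim$, because the first $r$ basis vectors $\xi_I=X^i_{-\eta_i}$ span $\g^f$ while the remaining duals $\xi_I^*$ span the annihilator $(\g^f)^\perp=\mathrm{im}\,\ad f$. I would split these $\gdim-r=2\dim\nneg$ constraints into two families: the momentum-map constraints $\chi$, namely those $\q^I$ with $\xi_I^*\in\nneg$, which cut $\lop\g$ down to $S=J^{-1}(e)$, and a complementary family of gauge-fixing constraints $\psi$, which cut $S$ down to $Q$. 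Since $[\nneg,\nneg]\subset\bigoplus_{i\leq-4}\g_i$ while $e\in\g_2$, one has $\bil{e}{[\nneg,\nneg]}=0$, and $\bil{\nneg}{\nneg}=0$ by the grading; hence the $\chi$ close among themselves on $S$ and are first-class. By Proposition \ref{DS as momentum } they generate the $\gauge$-action, and the cross-section property of $Q$ makes the mixed block $\{\chi_a,\psi_b\}_2$ invertible, so the full system $\{\chi,\psi\}$ is second-class.

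Second, I would check that this constraint system is invertible in the loop-space sense, which is what gives the Dirac formula meaning. For $I,J>r$ the second bracket reads $\{\q^I(x),\q^J(y)\}_2=\big(\epsilon\widetilde g^{IJ}\partial_x+\bil{b(x)}{[\xi_I^*,\xi_J^*]}\big)\tfrac1\epsilon\delta(x-y)$, a matrix-valued differential operator whose order-zero part at the tip $b=e$ of the slice equals $\bil{e}{[\xi_I^*,\xi_J^*]}=\bil{[e,\xi_I^*]}{\xi_J^*}$. The covectors $\xi_I^*$ with $I>r$ span $(\g^f)^\perp=\mathrm{im}\,\ad f$; since $e$ is principal, $\ad e$ is injective on $\mathrm{im}\,\ad f$ and carries it into $\mathrm{im}\,\ad e$, which meets $\g^f=(\mathrm{im}\,\ad f)^\perp$ only in $0$. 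Hence $[e,\xi_I^*]\notin(\mathrm{im}\,\ad f)^\perp$ whenever $\xi_I^*\neq0$, so $\bil{[e,\cdot]}{\cdot}$ is nondegenerate on the constraint directions; this exhibits an invertible leading symbol, the constraint operator inverts order by order in $\epsilon$, and the Dirac bracket $\{.,.\}_2^D$ is well defined on $Q$.

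Third, I would prove $\{.,.\}_2^D=\{.,.\}_2^Q$ using the extension-independence of the Dirac bracket. For $f,g\in\mathcal{R}$ I would take their gauge-invariant extensions to $\lop\g$; gauge invariance gives $\{f,\chi_a\}_2=0$ on the constraint surface, so in the Dirac correction only the contractions against the gauge-fixing constraints $\psi$ survive. Arranging the constraint matrix of the pair $(\chi,\psi)$ in block form, the vanishing of its $\{\chi,\chi\}_2$ block forces the $(\psi,\psi)$ block of the inverse to vanish identically; hence the surviving correction is zero and $\{f,g\}_2^D=\{f,g\}_2$. Restricted to $Q$ the right-hand side is, by Lemma \ref{ringInv} and the Leibniz-rule gauge fixing of \eqref{gauge fix}, the reduced bracket $\{f,g\}_2^Q$, and the equality of the two brackets follows.

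The main obstacle I anticipate is the differential, rather than purely algebraic, nature of the constraint matrix: in a field theory the Dirac formula requires inverting $\mathbb{F}^{IJ}$ as an operator, and a generic operator inverse is pseudodifferential, so the bracket it produces is a priori nonlocal. What must be secured is that, because $\ad e$ is invertible on the constraint directions $\mathrm{im}\,\ad f$, the mixed block $\{\chi,\psi\}_2$ inverts through a genuine differential operator that preserves the ring $R$ of differential polynomials, so that no nonlocality survives. This is precisely where the principality of $e$ and the transversality of the Slodowy slice are indispensable, and it is what guarantees that the Dirac reduction returns the local structure $\{.,.\}_2^Q$ rather than a nonlocal deformation of it.
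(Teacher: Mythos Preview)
The paper does not supply a proof of this proposition: it is quoted from \cite{BalFeh} without argument, and is used subsequently only through its corollaries (Propositions \ref{walg} and \ref{dirac-to-bihami}). So there is no ``paper's own proof'' to compare against; what you have written is essentially a reconstruction of the standard argument that Hamiltonian reduction with a good gauge slice coincides with Dirac reduction, specialised to Drinfeld--Sokolov. That argument is correct in outline: the split of the constraints $\q^\alpha$ into first-class momentum constraints $\chi$ (those with $\xi_I^*\in\nneg$, cutting $\lop\g$ to $S$) and gauge-fixing constraints $\psi$; the computation that $\{\chi,\chi\}_2$ vanishes on $S$ because $[\nneg,\nneg]\subset\g_{\le -4}$ and both $e$ and $\bneg$ pair trivially with $\g_{\le -4}$; and the block-inverse observation that the vanishing of the $\chi\chi$-block forces the $\psi\psi$-block of the inverse to vanish, so that for gauge-invariant extensions the Dirac correction term disappears and $\{f,g\}_2^D=\{f,g\}_2=\{f,g\}_2^Q$ on $Q$. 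This is exactly the mechanism used in \cite{BalFeh}.

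One point deserves tightening. In your second paragraph you verify the nondegeneracy of the constraint operator only at the tip $b=e$ of the slice. What is actually needed (and what the paper asserts just before Proposition \ref{dirac-to-bihami}) is that the finite matrix $\widetilde F^{\alpha\beta}$ is invertible \emph{everywhere on $Q'$}, which follows from the transversality of the Slodowy slice to the adjoint orbit through each of its points, not only through $e$. Once that is in hand, your order-by-order inversion in $\epsilon$ goes through, and Proposition \ref{mat poly} (from \cite{DamSab}) guarantees that the inverse stays polynomial, dispelling the nonlocality concern you raise at the end.
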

A consequence of this proposition is the following
\begin{prop} \cite{BalFeh}\label{walg}
\begin{eqnarray}\label{leading terms}
\{z^1(x),z^1(y)\}_2&=& \eps \delta^{'''}(x-y) +2 z^1(x) \delta'(x-y)+ z^1_x\delta(x-y) \\\nonumber
\{z^1(x),z^i(y)\}_2 &=& (\eta_i+1) z^i(x) \delta'(x-y)+ \eta_i z^i_x \delta(x-y).
\end{eqnarray}
\end{prop}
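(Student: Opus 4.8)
The plan is to compute both brackets from the unreduced Lie--Poisson structure $P_2$ of \eqref{bih:stru on g} via the Dirac reduction of Proposition \ref{dirac red}, organizing everything around the conformal symmetry generated by the Virasoro density $z^1$. The guiding principle is that $\int_{S^1} z^1(x)\,dx$ is the Noether charge for the scaling symmetry of the whole Drinfeld--Sokolov data: the constraint surface $\s=e+\lop\bneg$, the gauge group $\gauge$ and the slice $Q$ are all preserved by the combined flow of translation $\partial_x$ and the scaling generated by the Dynkin grading element $h$, since $e\in\g_2$ is an $\ad\,h$-eigenvector and translation is always a symmetry. Consequently the Hamiltonian flow of $z^1$ with respect to $\{.,.\}_2^Q$ must implement this infinitesimal conformal transformation, and the proposition is the precise quantitative form of that statement.

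First I would establish the primary-field relation $\{z^1(x),z^i(y)\}_2=(\eta_i+1)z^i(x)\delta'(x-y)+\eta_i z^i_x(x)\delta(x-y)$. Recall from the gauge fixing that $z^i$ is the coefficient of $X^i_{-\eta_i}\in\g_{-2\eta_i}$ in the gauge-fixed current $z(x)+e$, so it carries a definite $\ad\,h$-grade; by Corollary \ref{lin inv poly} it is quasihomogeneous of degree $2\eta_i+2$, hence of conformal weight $\eta_i+1$ once $\partial_x$ and $\delta(x-y)$ are assigned weight one. The translation part of the flow contributes $\eta_i z^i_x\,\delta(x-y)$ and the scaling part contributes $(\eta_i+1)z^i\,\delta'(x-y)$. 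Making this quantitative amounts to computing the gradient of $\int_{S^1} z^1\,dx$, which to leading order is $e$ (since $z^1=\bil z e$ extracts the $f=X^1_{-1}$ component), and propagating it through $P_2(v)=\tfrac1\eps[\eps\partial_x+q,v]$ and back onto the slice via the gauge fixing \eqref{gauge fix}. That the two coefficients come out as $\eta_i+1$ and $\eta_i$, rather than some other split, is forced by the conformal weight homogeneity together with the sl$_2$ relations \eqref{sl2relation}.

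For the Virasoro relation $\{z^1(x),z^1(y)\}_2=\eps\delta'''(x-y)+2z^1(x)\delta'(x-y)+z^1_x(x)\delta(x-y)$ I would compute directly. The terms $2z^1\delta'+z^1_x\delta$ are the $i=1$ case of the primary relation (weight $\eta_1+1=2$). The anomaly $\eps\delta'''$ is the only genuinely new ingredient: since $\bil e e=0$ the naive central term $\bil ee\,\delta'(x-y)$ of the Lie--Poisson part vanishes, so the anomaly cannot appear at order $\delta'$ and must instead be produced by the derivative correction $-\partial_x q_1^0$ in $z^1=q_1^1-\partial_x q_1^0+\tfrac12\tau\sum_i (q_i^0)^2\bil{X_0^i}{X_0^i}$. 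Concretely, the two factors $\partial_x q_1^0(x)$ and $\partial_y q_1^0(y)$ differentiate the Cartan central term of $\{q_1^0(x),q_1^0(y)\}_2$, nonzero because $\bil{X_0^1}{X_0^1}=\bil hh=2$, and push a $\delta'$ up to $\delta'''$. I expect the main obstacle to be exactly this bookkeeping: one must carry the gauge-fixing corrections and the quadratic term through the Dirac reduction carefully enough to fix the precise central normalization $\eps$ and to check that no spurious lower-order terms survive, using $\bil e f=1$ and Proposition \ref{reg:sl2:normalbasis}. The conceptual content, conformal covariance of the reduction, is straightforward; the difficulty is entirely in controlling these anomaly and correction terms and their coefficients.
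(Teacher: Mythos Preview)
The paper does not supply a proof of this proposition at all: it is quoted from \cite{BalFeh}, and the remark immediately following notes only that an alternative derivation via the generalized bihamiltonian reduction appears in \cite{mypaper}. So there is no in-paper argument to compare your proposal against; you are effectively filling in what the paper outsources.

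Your outline is the standard one and is correct in its architecture: $z^1$ generates the Virasoro algebra because the Drinfeld--Sokolov constraint surface, gauge group, and slice are all conformally covariant (translation plus the Dynkin grading), and the remaining $z^i$ are primary of weight $\eta_i+1$ by the quasihomogeneity of Corollary~\ref{lin inv poly}. The identification of the central term as arising from the $-\partial_x q_1^0$ correction in the explicit formula for $z^1$, together with $\bil{h}{h}=2$, is also the right mechanism. One caution: your primary-field step is phrased as ``the split into $(\eta_i+1)$ and $\eta_i$ is forced by conformal weight homogeneity together with the $sl_2$ relations \eqref{sl2relation}.'' Homogeneity alone only fixes the total weight of the bracket, not the two coefficients separately; what actually pins them down is either an explicit computation of the Hamiltonian flow of $\int z^1$ on the gauge-fixed current (as in \cite{BalFeh}), or the antisymmetry of the bracket combined with the fact that the Hamiltonian vector field of a local functional acts as a first-order differential operator. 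You should expect to do that small computation rather than appeal to homogeneity. With that adjustment, your plan would succeed and is essentially the route of the cited reference.
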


\begin{rem}
The bihamiltonian reduction is a method introduced  in \cite{CMP} to reduced a bihamiltonian structure to a certain submanifold. We can use it  to obtain a bihamiltonian structure from \eqref{bih:stru on g} associated to the principal nilpotent element $e$ \cite{CP}. The resulting bihamiltonian structure is defined on $Q$. We generalize the bihamiltonian reduction in \cite{mypaper} by imposing  some conditions. The result is a bihamiltonian structure associated to any nilpotent element in a simple Lie algebra. This generalization also simplifies the bihamiltonian reduction given in \cite{CP}. The Drinfeld-Sokolov reduction is also generalized to any nilpotent element in simple Lie algebra \cite{fehercomp}. A similar result to proposition \ref{dirac red} for generalized Drinfeld-Sokolov reduction was obtained in  \cite{BalFeh}. We used it in \cite{mypaper2} to  prove that  the generalized  Drinfeld-Sokolov reduction and the generalized bihamiltonian reduction for any  nilpotent element are the same. This in turn complete the comparison between the two reductions  began by the work of Pedroni and Casati \cite{CP}. In \cite{mypaper} we also obtained proposition \ref{walg} by performing the generalized bihamiltonian reduction.
\end{rem}

For the rest of this section we   consider  three types of indices which have different ranges; capital letters $I,J,K,...=1,..,\gdim$,
small letters $i,j,k,...=1,....,r$  and Greek letters
$\alpha,\beta,\delta,...=r+1,...,\gdim$.  Recall that the space $Q$ is defined by  $\q^\alpha=0$.

We note that the matrix $\widetilde{F}^{IJ}$ define  the finite Lie-Poisson structure on $\g$.  It is well known that the  symplectic subspaces of this structure are the  orbit spaces of $\g$ under the adjoint group action and we have  $r$ global Casimirs \cite{MRbook}. Since the Slodowy slice $Q'=e+\g^f$ is transversal to the orbit of $e$, the minor matrix $\widetilde{F}^{\alpha \beta}$ is nondegenerate. Let $\widetilde{F}_{\alpha \beta}$ denote its inverse.

\begin{prop}\label{dirac-to-bihami}\cite{mypaper}
The Dirac formulas for the leading terms of $\{.,.\}_2^Q$ is given by
\begin{equation}\label{finite Poiss brac}
F_2^{ij}=(\widetilde{F}^{ij}-\widetilde{F}^{i\beta} \widetilde{F}_{\beta\alpha} \widetilde{F}^{\alpha j}
)\end{equation}
\begin{equation}\label{finite Poiss brac2}
g^{ij}_2= \widetilde{g}^{ij}-\widetilde{g}^{i\beta}\widetilde{ F}_{\beta\alpha}\widetilde{F}^{\alpha
j}+\widetilde{F}^{i \beta}\widetilde{F}_{\beta \alpha} \widetilde{g}^{\alpha \varphi} \widetilde{F}_{\varphi
\gamma} \widetilde{F}^{\gamma j}-\widetilde{F}^{i\beta} \widetilde{F}_{\beta \alpha} \widetilde{g}^{\alpha j}.
\end{equation}
\end{prop}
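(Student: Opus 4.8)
The plan is to obtain \eqref{finite Poiss brac} and \eqref{finite Poiss brac2} by writing $\{\,.\,,\,.\,\}_2^Q$ as the Dirac reduction of $\{\,.\,,\,.\,\}_2$ onto the constraint submanifold $Q=\{\q^\alpha=0\}$, as permitted by Proposition \ref{dirac red}, and then expanding the Dirac formula in powers of $\epsilon$. On $\lop\g$ the bracket reads
\[
\{\q^I(x),\q^J(y)\}_2=\widetilde g^{IJ}\delta'(x-y)+\tfrac1\epsilon\widetilde F^{IJ}\delta(x-y),
\]
so, regarding the $\q^\alpha$ as second-class constraints, the relevant constraint operator is the matrix differential operator
\[
W^{\alpha\beta}=\widetilde g^{\alpha\beta}\partial_x+\tfrac1\epsilon\widetilde F^{\alpha\beta},\qquad \alpha,\beta=r+1,\dots,\gdim,
\]
whose leading part $\tfrac1\epsilon\widetilde F^{\alpha\beta}$ is invertible because the Slodowy slice is transversal to the orbit of $e$. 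The Dirac bracket of the surviving coordinates is then
\[
\{\q^i(x),\q^j(y)\}_2^Q=\{\q^i(x),\q^j(y)\}_2-\iint\{\q^i(x),\q^\alpha(s)\}_2\,(W^{-1})_{\alpha\beta}(s,t)\,\{\q^\beta(t),\q^j(y)\}_2\,ds\,dt,
\]
and the whole task is to invert $W$ and to sort the output according to its dependence on $\delta(x-y)$ and $\delta'(x-y)$.

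First I would invert $W^{\alpha\beta}$ as an operator. Factoring $W=\tfrac1\epsilon\widetilde F\bigl(1+\epsilon\,\widetilde F^{-1}\widetilde g\,\partial_x\bigr)$ and expanding the Neumann series yields
\[
(W^{-1})_{\alpha\beta}=\epsilon\,\widetilde F_{\alpha\beta}-\epsilon^2\,\widetilde F_{\alpha\gamma}\,\widetilde g^{\gamma\varphi}\,\partial_x\,\widetilde F_{\varphi\beta}+O(\epsilon^3),
\]
which is exactly the expansion made possible by the nondegeneracy of $\widetilde F^{\alpha\beta}$. Substituting this, together with the mixed legs $\{\q^i(x),\q^\alpha(s)\}_2$ and $\{\q^\beta(t),\q^j(y)\}_2$ --- each again of the form $\widetilde g\,\delta'+\tfrac1\epsilon\widetilde F\,\delta$ --- into the Dirac formula, I would compose the operators and collect powers of $\epsilon$. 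At order $\epsilon^{-1}$ only the all-$\widetilde F$, all-$\delta$ combination of the three factors survives and produces the coefficient of $\delta(x-y)$, giving $F_2^{ij}=\widetilde F^{ij}-\widetilde F^{i\beta}\widetilde F_{\beta\alpha}\widetilde F^{\alpha j}$. At order $\epsilon^0$ the coefficient of $\delta'(x-y)$ collects exactly four contributions: the direct term $\widetilde g^{ij}$; the two terms in which a single $\widetilde g\,\partial_x$ enters through one of the two legs while the central factor is taken at its leading order $\epsilon\,\widetilde F_{\alpha\beta}$; and the term in which both legs contribute their $\widetilde F$ part while the $\widetilde g\,\partial_x$ enters through the second-order part of $(W^{-1})$. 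Matching the single overall minus sign of the Dirac formula against the signs of the Neumann expansion then reproduces precisely the four summands of \eqref{finite Poiss brac2}.

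The step that demands care --- and the only real obstacle --- is the bookkeeping in the operator composition. Every factor $\partial_x$ appearing in a leg or in $(W^{-1})$ can act either on the remaining $\delta$-function, producing a genuine $\delta'(x-y)$ that feeds $g_2^{ij}$, or on one of the $\q$-dependent coefficients $\widetilde F^{\alpha j}(\q)$, $\widetilde F_{\alpha\beta}(\q)$, producing a factor $\q^k_x$ and hence a $\delta(x-y)$ term that belongs to the connection part $\Gamma_{2k}^{ij}$ rather than to $g_2^{ij}$. Since the proposition concerns only $F_2^{ij}$ and $g_2^{ij}$, I would systematically discard the derivative-of-coefficient terms and retain only the contributions in which each $\partial_x$ lands on the $\delta$-function; the constant metric $\widetilde g^{IJ}=\bil{\xi_I^*}{\xi_J^*}$ never contributes a derivative term, so only the linear $\widetilde F^{IJ}=c^{IJ}_K\q^K$ and the rational $\widetilde F_{\alpha\beta}$ need watching. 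Carrying this consistently through the triple product, and checking that the internal index sums close over the Greek range $r+1,\dots,\gdim$, then yields \eqref{finite Poiss brac} and \eqref{finite Poiss brac2}.
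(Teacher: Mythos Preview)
The paper does not actually prove this proposition; it is quoted with a citation to \cite{mypaper} and no argument is given in the present text. Your derivation is the standard one and is correct: invoking Proposition~\ref{dirac red}, writing the constraint operator $W^{\alpha\beta}=\widetilde g^{\alpha\beta}\partial_x+\tfrac{1}{\epsilon}\widetilde F^{\alpha\beta}$, inverting it via the Neumann series $(W^{-1})_{\alpha\beta}=\epsilon\widetilde F_{\alpha\beta}-\epsilon^2\widetilde F_{\alpha\gamma}\widetilde g^{\gamma\varphi}\partial_x\widetilde F_{\varphi\beta}+O(\epsilon^3)$, and reading off the coefficients of $\epsilon^{-1}\delta(x-y)$ and $\delta'(x-y)$ in the triple product indeed reproduces \eqref{finite Poiss brac} and \eqref{finite Poiss brac2}. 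Your remark that the $\partial_x$-on-coefficient terms feed only $\Gamma^{ij}_{2k}$ and may be discarded for the purposes of $g_2^{ij}$ is exactly the right bookkeeping observation; this is presumably the same computation carried out in \cite{mypaper}.
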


Now we are able to prove the following
\begin{prop}\label{the pencil}
The Drinfeld-Sokolov  bihamiltonian structure on $Q$ admits a dispersionless limit. The corresponding  bihamiltonian structure of hydrodynamic type  gives a flat pencil of metrics on the Slodowy slice $Q'$.
\end{prop}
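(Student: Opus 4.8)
The plan is to establish the two assertions separately: first that the pencil $P^Q_\lambda = P^Q_2 + \lambda P^Q_1$ admits a dispersionless limit, and then that the resulting hydrodynamic-type brackets form a \emph{flat} pencil on $Q'$. For the dispersionless limit, by definition I must show that the $\delta(x-y)$-coefficients $F^{ij}_1$ and $F^{ij}_2$ vanish identically (while the $\delta'(x-y)$-coefficients do not). The key leverage is Proposition~\ref{diff relaton}, which gives $\partial_{z^r}F^{ij}_2 = F^{ij}_1$ and $\partial_{z^r}g^{ij}_2 = g^{ij}_1$. Thus it suffices to control $F^{ij}_2$, and the Dirac formula \eqref{finite Poiss brac} of Proposition~\ref{dirac-to-bihami} expresses $F^{ij}_2$ entirely in terms of the finite Lie--Poisson data $\widetilde F^{IJ}$ restricted and Schur-complemented over the transversal directions. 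The point is that $F^{ij}_2(z)$ is exactly the reduction to the Slodowy slice $Q'$ of the finite Lie--Poisson tensor, i.e.\ the Dirac-reduced bracket of the \emph{finite} dimensional structure on $Q'=e+\g^f$.

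First I would argue that this finite Dirac-reduced tensor vanishes. Since $Q'$ is transversal to the adjoint orbit of the principal nilpotent $e$ (a Slodowy slice meets the regular orbit transversally and $e$ is regular), the Lie--Poisson structure restricted to $Q'$ by Dirac reduction is identically zero: the tangent directions to $Q'$ are complementary to the symplectic leaf directions, so the Schur complement $\widetilde F^{ij}-\widetilde F^{i\beta}\widetilde F_{\beta\alpha}\widetilde F^{\alpha j}$ collapses. Concretely, $F^{ij}_2$ is a Poisson tensor on the finite manifold $Q'$ whose symplectic leaves are single points (the $r$ invariants $z^i$ being a full set of Casimirs coming from the $r$ global Casimirs of $\g$), hence $F^{ij}_2\equiv 0$. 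Then Proposition~\ref{diff relaton} immediately forces $F^{ij}_1=\partial_{z^r}F^{ij}_2=0$ as well. This establishes that both $P^Q_1$ and $P^Q_2$, and therefore $P^Q_\lambda$ for generic $\lambda$, admit a dispersionless limit, with leading metrics $g^{ij}_1$ and $g^{ij}_2$ given by \eqref{finite Poiss brac2}.

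For the flat-pencil claim I would invoke the general machinery recalled in the preliminaries. Once the dispersionless limit exists, the brackets $\{z^i,z^j\}^{[0]}_{1}$ and $\{z^i,z^j\}^{[0]}_{2}$ are Poisson brackets of hydrodynamic type on $\lop{Q'}$; nondegeneracy of $g^{ij}_1$ is exactly Proposition~\ref{nondeg}, which also computes $\det g^{ij}_1=\det A_{ij}\neq 0$, and nondegeneracy of $g^{ij}_\lambda$ for generic $\lambda$ follows since it is a polynomial in $\lambda$ not identically zero (its $\lambda^r$ coefficient is $\det g^{ij}_1$). By the Dubrovin--Novikov theorem cited after \eqref{genLocBraGen}, each nondegenerate hydrodynamic bracket endows $T^*Q'$ with a flat contravariant metric and Levi-Civita connection, and since $P^Q_\lambda$ is itself a Poisson pencil for all $\lambda$, the metrics $g^{ij}_1$ and $g^{ij}_2$ form a flat pencil by the very definition recalled before Definition~\ref{def reg}.

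The main obstacle I anticipate is the vanishing of $F^{ij}_2$: one must be careful that the Dirac reduction of the finite Lie--Poisson bracket onto the Slodowy slice really is trivial, rather than merely of lower rank. The cleanest route is to identify the $z^i$ as generators pulled back from the $r$ Casimir functions on $\g$ (equivalently, the $\A$-module structure guarantees $\g^f$ is complementary to the orbit tangent space $[\g,e]$), so that $Q'$ is a cross-section of the regular orbits and the reduced finite bracket has only point leaves. All remaining steps are either direct appeals to the quoted theorems or routine verifications, so this is the step deserving the most care.
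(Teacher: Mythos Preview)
Your proposal is correct and follows essentially the same route as the paper: you identify $F^{ij}_2$ with the Dirac reduction of the finite Lie--Poisson structure to the Slodowy slice and use transversality to the regular orbit to conclude it vanishes, then obtain $F^{ij}_1=0$ from Proposition~\ref{diff relaton}, and finally invoke Proposition~\ref{nondeg} together with the general theory of hydrodynamic brackets to get the flat pencil. The only small difference is that the paper explicitly cites Proposition~\ref{walg} to see that $g^{ij}_2$ is nontrivial (hence the dispersionless limit of $\{.,.\}^Q_2$ is nonzero and $g^{ij}_2$ is in fact nondegenerate), whereas you leave this implicit; your relation $\partial_{z^r}g^{ij}_2=g^{ij}_1$ combined with $\det g^{ij}_1\neq 0$ already forces $g^{ij}_2$ to be nondegenerate for generic $z^r$, so nothing is missing.
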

\begin{proof}
We note that  \eqref{finite Poiss brac} is the formula of the Dirac reduction of the Lie-Poisson brackets of $\g$ to the finite space $Q'$. The fact that Slodowy slice is transversal to the orbit space of the nilpotent element and this orbit has dimension $\gdim-r$ yield $F^{ij}_2$ is trivial. From proposition \ref{walg}   it follows that $g^{ij}_2$ is not trivial. This prove that the brackets $\{.,.\}^Q_2$ admits a dispersionless limit. From propositions \ref{nondeg} and  \ref{diff relaton} it follows that $\{.,.\}_1^Q$ admits a dispersionless limit and the  matrix $g_2^{ij}$ is nondegenerate. Therefore, the two matrices $g_1^{ij}$ and $g_2^{ij}$ define a flat pencil of metrics on $Q'$.
\end{proof}

Now we want to study the quasihomogeneity  of the entries of the matrix $g^{ij}_2$.  We assign the  degree $\mu_I+2$ to $\q^I(x)$ if ${\xi^*_I}\in \g_{\mu_I}$. These degrees agree with those given in corollary \ref{lin inv poly}. We observe that  degree  $\q^{\gdim-I+1}$ equal to $-\mu_I+2$ from our order of the basis, and  an entry  $\widetilde F^{IJ}$ is quasihomogenous of degree  $\mu_I+\mu_J+2$ since $[\g_{\mu_I},\g_{\mu_J}]\subset \g_{\mu_I+\mu_J}$, .

The following proposition proved in \cite{DamSab}
\begin{prop}\label{mat poly}
The matrix $\widetilde{F}_{\beta \alpha} $  restricted to $Q$ is polynomial and the  entry $\widetilde{F}_{\beta \alpha} $ is quasihomogenous of degree $-\mu_\beta-\mu_\alpha-2$
\end{prop}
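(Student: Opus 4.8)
My plan is to reduce the entire statement to one fact: that $\det\big((\widetilde F^{\alpha\beta})|_Q\big)$ is a nonzero constant. Granting this, Cramer's rule expresses each inverse entry $\widetilde F_{\beta\alpha}|_Q$ as a cofactor of $(\widetilde F^{\alpha\beta})|_Q$ divided by this constant; since each $\widetilde F^{\alpha\beta}|_Q$ is a polynomial (of degree at most one) in the coordinates $\q^i$, the cofactors are polynomials, and dividing by a nonzero constant keeps them so. Polynomiality is then immediate, and the degree will follow from the same cofactor bookkeeping.

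First I would observe that setting $\q^\alpha=0$ to pass to $Q$ preserves quasihomogeneity, so each $\widetilde F^{\alpha\beta}|_Q$ is quasihomogeneous of degree $\mu_\alpha+\mu_\beta+2=(\mu_\alpha+1)+(\mu_\beta+1)$, where every coordinate $\q^i$ carries the strictly positive weight $2\eta_i+2$. Expanding $\det=\sum_\sigma \mathrm{sgn}(\sigma)\prod_\alpha \widetilde F^{\alpha\,\sigma(\alpha)}|_Q$, each summand has the common weighted degree $\sum_\alpha\big((\mu_\alpha+1)+(\mu_{\sigma(\alpha)}+1)\big)=2\sum_\alpha(\mu_\alpha+1)$, so the determinant is a quasihomogeneous polynomial of precisely that degree.

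The heart of the proof — and the step I expect to be the main obstacle — is to show this weighted degree is $0$, for then a quasihomogeneous polynomial of degree $0$ in positively weighted variables can only be a constant. This amounts to the identity $\sum_\alpha(\mu_\alpha+1)=0$, which I would establish from the grading alone. The dual vectors $\xi_I^*$ form a basis of $\g$ whose grades are the negatives of those of the $\xi_I$, so $\sum_I\mu_I=0$ by the symmetry of the Dynkin grading. Removing the first $r$ basis vectors $\xi_i=X^i_{-\eta_i}\in\g_{-2\eta_i}$, for which $\mu_i=2\eta_i$, yields $\sum_\alpha\mu_\alpha=-2\sum_i\eta_i$, while the number of Greek indices is $\gdim-r=2\sum_i\eta_i$ since $\dim V^i=2\eta_i+1$. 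Hence $\sum_\alpha(\mu_\alpha+1)=0$, the determinant is constant, and it is nonzero because $(\widetilde F^{\alpha\beta})$ is nondegenerate by transversality of the Slodowy slice to the orbit of $e$.

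With $\det$ a nonzero constant, polynomiality of $\widetilde F_{\beta\alpha}|_Q$ holds as in the first paragraph. For the degree, the relevant cofactor is a sum of products omitting one row and one column, hence quasihomogeneous of degree $2\sum_\gamma(\mu_\gamma+1)-(\mu_\alpha+1)-(\mu_\beta+1)=-\mu_\alpha-\mu_\beta-2$; dividing by the degree-$0$ constant $\det$ preserves this, so $\widetilde F_{\beta\alpha}|_Q$ is quasihomogeneous of degree $-\mu_\beta-\mu_\alpha-2$. Entries whose nominal degree is negative are forced to vanish, in harmony with polynomiality. Equivalently, one may read off the degree from the scaling relation $(\widetilde F^{\alpha\beta})\mapsto \Lambda(\widetilde F^{\alpha\beta})\Lambda$ under $\q^i\mapsto\lambda^{2\eta_i+2}\q^i$, with $\Lambda=\mathrm{diag}(\lambda^{\mu_\alpha+1})$, whose inverse transforms by $\Lambda^{-1}$ on each side.
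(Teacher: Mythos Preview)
Your proof is correct. The paper itself does not supply a proof of this proposition; it merely cites \cite{DamSab}, so your argument is a genuine self-contained addition rather than a reconstruction of something already in the text.

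The key step --- showing $\sum_\alpha(\mu_\alpha+1)=0$ so that $\det(\widetilde F^{\alpha\beta}|_Q)$ is quasihomogeneous of degree zero and hence a constant --- is carried out cleanly: the symmetry of the Dynkin grading gives $\sum_I\mu_I=0$, the first $r$ indices contribute $\sum_i\mu_i=2\sum_i\eta_i$, and the count $\gdim-r=\sum_i(2\eta_i+1)-r=2\sum_i\eta_i$ balances it exactly. The nondegeneracy of $(\widetilde F^{\alpha\beta})$ on $Q'$, which the paper already records just before Proposition~\ref{dirac-to-bihami}, then forces this constant to be nonzero, and Cramer's rule finishes both polynomiality and the degree count. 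Your scaling argument with $\Lambda=\mathrm{diag}(\lambda^{\mu_\alpha+1})$ is the tidiest way to read off the degree $-\mu_\alpha-\mu_\beta-2$ of the inverse entries.

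One small remark: the paper writes $\widetilde F^{IJ}=\sum_K c^{IJ}_K \q^K$, but since the coordinates $\q^K$ are centered at $e$ the Lie--Poisson tensor at $b=e+\sum\q^K\xi_K$ actually carries an additional constant term $\bil{e}{[\xi_I^*,\xi_J^*]}$; this is what makes $(\widetilde F^{\alpha\beta})$ nondegenerate at the origin of $Q'$. That constant has weighted degree $\mu_I+\mu_J+2=0$ whenever it is nonzero, so it is fully compatible with the quasihomogeneity you use and your argument is unaffected.
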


\begin{prop}\label{homg of g2}
The entry $g^{ij}_2$ is quasihomogenous of degree $2\eta_i+2\eta_j$
\end{prop}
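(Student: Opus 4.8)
The plan is to read the quasihomogeneity of $g^{ij}_2$ off its Dirac expression in Proposition~\ref{dirac-to-bihami} by a pure degree count. First I would fix the weights of the three types of building block occurring in \eqref{finite Poiss brac2}. For a small index $i\le r$ we have $\xi_i=X^i_{-\eta_i}\in\g_{-2\eta_i}$, so $\xi_i^*\in\g_{2\eta_i}$ and $\mu_i=2\eta_i$; thus the asserted degree $2\eta_i+2\eta_j$ is precisely $\mu_i+\mu_j$. The structure-constant block $\widetilde F^{IJ}=\sum_K c^{IJ}_K\q^K$ is quasihomogeneous of degree $\mu_I+\mu_J+2$ because $[\g_{\mu_I},\g_{\mu_J}]\subset\g_{\mu_I+\mu_J}$, and by Proposition~\ref{mat poly} the inverse block $\widetilde F_{\beta\alpha}$ is quasihomogeneous of degree $-\mu_\beta-\mu_\alpha-2$.

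The delicate point is the constant metric $\widetilde g^{IJ}=\bil{\xi_I^*}{\xi_J^*}$. Since the invariant form pairs $\g_\mu$ only with $\g_{-\mu}$, this entry vanishes unless $\mu_I+\mu_J=0$; hence, although constant, it may be assigned the formal degree $\mu_I+\mu_J$, which is $0$ exactly on its support. (In particular $\widetilde g^{ij}=0$ for $i,j\le r$, as $\mu_i+\mu_j=2\eta_i+2\eta_j>0$.) With this convention every factor in \eqref{finite Poiss brac2} carries a degree equal to the sum of its two $\mu$-indices plus a constant $+2$, $-2$, or $0$ according as it is an $\widetilde F$, an inverse block $\widetilde F_{\bullet\bullet}$, or a $\widetilde g$.

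Next I would substitute these weights into the four summands of \eqref{finite Poiss brac2}, the key being that each contracted Greek index appears with weight $+\mu$ in one factor and $-\mu$ in the neighbouring factor, so all internal indices cancel. For example $\widetilde g^{i\beta}\widetilde F_{\beta\alpha}\widetilde F^{\alpha j}$ has degree $(\mu_i+\mu_\beta)+(-\mu_\beta-\mu_\alpha-2)+(\mu_\alpha+\mu_j+2)=\mu_i+\mu_j$, and the same telescoping sends the quadratic term $\widetilde F^{i\beta}\widetilde F_{\beta\alpha}\widetilde g^{\alpha j}$ and the quartic term $\widetilde F^{i\beta}\widetilde F_{\beta\alpha}\widetilde g^{\alpha\varphi}\widetilde F_{\varphi\gamma}\widetilde F^{\gamma j}$ to degree $\mu_i+\mu_j$ too, while the leading term $\widetilde g^{ij}$ has degree $\mu_i+\mu_j$ by the same convention. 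As all summands share the common degree $\mu_i+\mu_j=2\eta_i+2\eta_j$, so does $g^{ij}_2$.

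I expect the only genuine obstacle to be justifying the degree bookkeeping for the constant $\widetilde g^{IJ}$: one must check that the cancellation of the contracted indices is uniform over the summation, i.e. that for fixed Greek indices each individual summand is quasihomogeneous with a degree independent of those indices. The $\pm\mu$ telescoping above is exactly this verification, so once it is made the conclusion is immediate.
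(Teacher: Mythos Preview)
Your proposal is correct and follows essentially the same approach as the paper: both read the degree of $g^{ij}_2$ directly from the Dirac formula \eqref{finite Poiss brac2} by a term-by-term degree count using the weights of $\widetilde F^{IJ}$, of $\widetilde F_{\beta\alpha}$ (via Proposition~\ref{mat poly}), and of $\widetilde g^{IJ}$. The only cosmetic difference is that the paper handles $\widetilde g^{IJ}$ by invoking its antidiagonal form $\widetilde g^{IJ}=C^I\delta^I_{\mathbf n-J+1}$ together with $\mu_{\mathbf n-I+1}=-\mu_I$, whereas you package the same fact as a ``formal degree $\mu_I+\mu_J$'' convention supported on the locus $\mu_I+\mu_J=0$; the resulting telescoping computations are identical.
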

\begin{proof}
We will derive the quasihomogeneity from the expression \eqref{finite Poiss brac2}. We know that the matrix $\widetilde g^{IJ}$ is constant antidiagonal, i.e  $g^{IJ}=C^I \delta^I_{\gdim-J+1}$ where $C^I$ are nonzero constants. In particular $g^{ij}=0$. Now  for a fixed $i$ we have
\[\widetilde{g}^{i\beta}\widetilde{ F}_{\beta\alpha}\widetilde{F}^{\alpha
j}= C^i \widetilde{ F}_{\gdim -i+1,\alpha}\widetilde{F}^{\alpha
j}.\]
 But then the left hand sight is quasihomogenous of degree \[\mu_j+\mu_\alpha+2 - \mu_\alpha-(-\mu_i)-2=\mu_j+\mu_i=2\eta_i+2\eta_j.\]
A similar argument show that  $ \widetilde{F}^{i\beta} \widetilde{F}_{\beta \alpha} \widetilde{g}^{\alpha j}$ is quasihomogeneous of degree $2\eta_i+2\eta_j$. Let us consider
\[\widetilde{F}^{i \beta}\widetilde{F}_{\beta \alpha} \widetilde{g}^{\alpha \varphi} \widetilde{F}_{\varphi
\gamma} \widetilde{F}^{\gamma j}=\sum_\alpha C^{\alpha} \widetilde{F}^{i \beta}\widetilde{F}_{\beta \alpha} \widetilde{F}_{ \gdim-\alpha+1,
\gamma} \widetilde{F}^{\gamma j}. \]
Then any term in this summation will have the degree
\[\mu_i+\mu_\beta+2-\mu_\beta-\mu_\alpha-2-\mu_{\gdim-\alpha+1}-\mu_\gamma-2+\mu_\gamma+\mu_j+2=2 \eta_i+2\eta_j\]
This complete the proof.
\end{proof}

\section{Polynomial Frobenius manifold}

Let us consider the finite dimension manifold $Q'$ defined by the coordinates $z^1,...,z^n$. We will obtain a natural polynomial Frobenius structure on  $Q'$.

 The proof of the following proposition depends only on the quasihomogeneity of the matrix $g^{ij}_1$.
\begin{prop}\cite{DCG}
There exist    quasihomogenous polynomials coordinates of degree $d_i$ in the form
\[ t^i=z^i+T^i(z^1,...,z^{i-1})\] such that  the matrix $g_1^{ij}(t)$   is  constant antidiagonal.
\end{prop}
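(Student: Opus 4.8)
The claim is that one can find new coordinates $t^i = z^i + T^i(z^1,\dots,z^{i-1})$, quasihomogeneous of degree $d_i$, in which the matrix $g_1^{ij}$ becomes constant and antidiagonal with respect to the exponents. We already know from Lemma \ref{homg of g1} and Proposition \ref{nondeg} that in the $z$-coordinates the matrix $g_1^{ij}(z)$ is \emph{lower} antidiagonal with respect to the exponents, that the genuine antidiagonal entries (those with $\eta_i+\eta_j=\kappa+1$) are already nonzero constants equal to the entries of the nondegenerate matrix $A_{ij}$, and that each $z^i$ is quasihomogeneous of degree $2\eta_i+2$. The task is therefore to kill the sub-antidiagonal entries $g_1^{ij}(z)$ with $\eta_i+\eta_j<\kappa+1$ by a triangular, degree-preserving change of variables, without disturbing the constant antidiagonal.

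**The plan.**
The plan is to proceed by induction, building the correction terms $T^i$ one at a time in order of increasing degree. I assign to $z^i$ the degree $d_i$ (normalizing so that the degrees are the $2\eta_i+2$ rescaled if convenient); a polynomial $T^i(z^1,\dots,z^{i-1})$ that is quasihomogeneous of degree $d_i$ and depends only on lower-index variables automatically keeps $t^i = z^i + T^i$ quasihomogeneous of the correct degree, and makes the Jacobian of the change of coordinates lower-triangular with ones on the diagonal. Under such a transformation $g_1^{ij}$ transforms as a $(2,0)$-tensor, $g_1^{ij}(t)=\partial_{z^a}t^i\,g_1^{ab}(z)\,\partial_{z^b}t^j$. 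Because the Jacobian is lower triangular and unipotent, the transformed antidiagonal entry is unchanged modulo contributions coming strictly from entries $g_1^{ab}(z)$ with $\eta_a+\eta_b<\kappa+1$; this is exactly the flexibility I exploit. First I would record that any off-antidiagonal entry $g_1^{ij}(z)$ with $\eta_i+\eta_j<\kappa+1$ is a quasihomogeneous polynomial of the (positive) degree $2\eta_i+2\eta_j-2\kappa$, a multiple of lower-degree $z$'s; this is the key structural input that makes the induction terminate.

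**The inductive step and closing.**
At each stage I would isolate the antidiagonal block of the current metric and seek corrections $\partial_{z}T$ that, through the $(2,0)$-transformation law, add to the offending entry a term that cancels it. Concretely, since the leading antidiagonal block $A_{ij}$ is a nondegenerate constant matrix, the equation determining the needed correction $T$ at a given degree is linear in the unknown derivatives of $T$ with an invertible coefficient matrix given by $A$; hence one solves for $T$ degree-by-degree and the solution is a quasihomogeneous polynomial because the right-hand side is. I would run the induction from the smallest degrees upward so that each newly introduced $T^i$ only affects entries already set constant at lower stages in a controlled way, and verify that the already-constant antidiagonal is preserved since the unipotent triangular Jacobian does not alter the top-degree antidiagonal part. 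The main obstacle I anticipate is bookkeeping the tensorial transformation so that correcting one sub-antidiagonal entry does not reintroduce nonconstant terms in entries already normalized; this is handled precisely by the lower-triangular (in the exponent ordering) structure of $g_1^{ij}$ established in Lemma \ref{homg of g1}, which guarantees that the corrections propagate only downward and the process terminates after finitely many steps. This is essentially Dubrovin's normalization argument from \cite{DCG}, and I would cite it for the routine linear-algebra verification.
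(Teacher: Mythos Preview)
The paper does not actually prove this proposition; it cites \cite{DCG} and only remarks that ``the proof of the following proposition depends only on the quasihomogeneity of the matrix $g^{ij}_1$.'' Your outline is precisely the standard Dubrovin normalization argument from \cite{DCG}, so in spirit you are doing exactly what the paper intends.

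That said, you have the inequality reversed and the degree slightly off. From Lemma~\ref{homg of g1} the entries with $\eta_i+\eta_j<\kappa+1$ are already \emph{zero}; the entries you must kill are the strictly \emph{below}-antidiagonal ones with $\eta_i+\eta_j>\kappa+1$. The degree of $g_1^{ij}(z)$ is $(2\eta_i+2)+(2\eta_j+2)-(2\kappa+2)-4=2(\eta_i+\eta_j-\kappa-1)$, not $2\eta_i+2\eta_j-2\kappa$; this is positive exactly when $\eta_i+\eta_j>\kappa+1$, which is what makes such an entry a genuine polynomial in the lower-degree $z^k$'s and hence removable by your triangular correction. With these two corrections your inductive scheme goes through as written.
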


For the remainder of this section, we fix  a  coordinates $(t^1,...,t^n)$ satisfying the proposition above. The following proposition emphasis that  under this change of coordinates some entries of the matrix $g_2^{ij}$ remain invariant.
\begin{prop}
The second metric $g^{ij}_2(t)$ and its Levi-Civita connection have the following entries
\begin{equation}
g^{1,n}_2(t)= (\eta_i+1) t^i, ~ \Gamma^{1j}_{2 k}(t)=\eta_j \delta^j_k
\end{equation}
\end{prop}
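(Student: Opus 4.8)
The plan is to read off the two components in the original coordinates $z^i$ directly from Proposition \ref{walg}, and then to show that precisely these components survive the triangular change of coordinates $z\mapsto t$ fixed above. Comparing Proposition \ref{walg} with the notation \eqref{reducedPB notations} for the leading terms of $\{.,.\}^Q_2$, and recalling $\eta_1=1$, one reads off for every $i$
\begin{equation}
g_2^{1i}(z)=(\eta_i+1)\,z^i,\qquad \Gamma_{2k}^{1i}(z)=\eta_i\,\delta^i_k,
\end{equation}
the dispersive term $\eps\,\delta'''(x-y)$ playing no role in the dispersionless leading coefficients. I would also record the key simplification $t^1=z^1$: since $t^1=z^1+T^1$ with $T^1$ a quasihomogeneous polynomial of positive degree $2\eta_1+2$ in the empty set of variables, $T^1$ is a constant of positive degree, hence $T^1=0$.

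For the metric component, $g_2^{ij}$ is a $(2,0)$-tensor, so using $t^1=z^1$,
\begin{equation}
g_2^{1i}(t)=\frac{\partial t^1}{\partial z^a}\frac{\partial t^i}{\partial z^b}\,g_2^{ab}(z)=\frac{\partial t^i}{\partial z^b}\,(\eta_b+1)\,z^b.
\end{equation}
By Corollary \ref{lin inv poly} the coordinate $t^i$ is quasihomogeneous of degree $2\eta_i+2$, each $z^b$ carrying weight $2\eta_b+2$, so Euler's identity $\sum_b(2\eta_b+2)\,z^b\,\partial_{z^b}t^i=(2\eta_i+2)\,t^i$, divided by $2$, gives exactly $g_2^{1i}(t)=(\eta_i+1)\,t^i$.

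For the connection I would first derive the transformation law of the contravariant Christoffel symbols of a hydrodynamic-type (Dubrovin--Novikov) bracket under the point transformation $t=t(z)$. Expanding $\{t^i(x),t^j(y)\}^{[0]}_2=\partial_a t^i(x)\,\partial_b t^j(y)\,\{z^a(x),z^b(y)\}^{[0]}_2$ and using the distributional identity $f(y)\delta'(x-y)=f(x)\delta'(x-y)+(\partial_x f)(x)\,\delta(x-y)$ to transport every coefficient to the point $x$, one obtains, after again invoking $t^1=z^1$ to kill the inhomogeneous second-derivative term carried by the first index,
\begin{equation}
\Gamma_{2l}^{1j}(t)=\left[\frac{\partial t^j}{\partial z^b}\,\Gamma_{2k}^{1b}(z)+g_2^{1b}(z)\,\frac{\partial^2 t^j}{\partial z^b\partial z^k}\right]\frac{\partial z^k}{\partial t^l}.
\end{equation}
Substituting the $z$-coordinate values $\Gamma_{2k}^{1b}(z)=\eta_b\delta^b_k$ and $g_2^{1b}(z)=(\eta_b+1)z^b$ turns the bracket into $\eta_k\,\partial_{z^k}t^j+(\eta_b+1)z^b\,\partial^2_{z^b z^k}t^j$. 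Differentiating the Euler identity for $t^j$ with respect to $z^k$ yields $(\eta_b+1)z^b\,\partial^2_{z^b z^k}t^j=(\eta_j-\eta_k)\,\partial_{z^k}t^j$, so the bracket collapses to $\eta_j\,\partial_{z^k}t^j$ and hence $\Gamma_{2l}^{1j}(t)=\eta_j\,\dfrac{\partial t^j}{\partial z^k}\dfrac{\partial z^k}{\partial t^l}=\eta_j\,\delta^j_l$.

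The only genuinely technical point is the transformation rule for $\Gamma_2$: unlike the metric it does not transform tensorially, and one must track the inhomogeneous term produced when $\partial_b t^j(y)$ is moved off $\delta'(x-y)$. Everything else reduces to the Euler identity and its first $z^k$-derivative, and the crucial simplification throughout is $t^1=z^1$, which annihilates the contribution of the first index to that inhomogeneous term.
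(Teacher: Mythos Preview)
Your proof is correct and follows essentially the same route as the paper: both read off $g_2^{1i}(z)$ and $\Gamma^{1i}_{2k}(z)$ from Proposition \ref{walg}, use $t^1=z^1$, transform the metric by the tensor law combined with the Euler identity for the quasihomogeneous $t^i$, and transform the contravariant Christoffel symbols by the standard inhomogeneous rule, collapsing the second-derivative term via the differentiated Euler identity. The only cosmetic differences are that the paper packages the Euler identity as the vector field $E'=\sum_i(\eta_i+1)z^i\partial_{z^i}$ and writes the connection transformation as an equality of $1$-forms rather than contracting against $\partial z^k/\partial t^l$.
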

\begin{proof}
We know from proposition \ref{walg} that in the coordinates $z^i$ the matrix $g^{ij}_2(z)$ and its Levi-Civita connection  have the following entries
\begin{equation}
g^{1,n}_2(z)= (\eta_i+1) z^i, ~ \Gamma^{1j}_{2 k}(z)=\eta_j \delta^j_k
\end{equation}
Let $E'$ denote the Euler vector field give by
\beq
E'=\sum_i (\eta_i+1) z^i { \partial_{z^i}}.
\eeq
Then from the quasihomogeneity of $t^i$ we have $E'(t^i)=(\eta_i+1) t^i$. The formula for change of coordinates and the fact that $t^1=z^1$  give the following
\beq
g^{1j}(t)={\partial_{z^a} t^1 } {\partial_{z^b} t^j}~ g_2^{a b}(z)= E'(t^j)=(\eta_j+1) t^j.
\eeq
For the contravariant Levi-Civita connection the change of coordinates has the following formula
\beq
\Gamma^{ij}_{2k}(t) d t^k=\Big({\partial_{z^a} t^i } {\partial_{z^c}\partial_{z^b} t^j} g_2^{a b}(z)+ {\partial_{z^a} t^i} {\partial_{z^b} t^j  } \Gamma^{a b}_{2c}(z)\Big) d z^c.
\eeq
But then we get
\begin{eqnarray}
\Gamma^{1j}_{2k} d t^k&=&\Big( E' ({\partial_{z^c} t^j  })+  {\partial_{z^b} t^j  } \Gamma^{1 b}_{2c}\Big) d z^c\\\nonumber
&=& \Big( (\eta_j-\eta_c){\partial_{z^c} t^j  }+  \eta_c {\partial_{z^c} t^j  } \Big) d z^c=\eta_j {\partial_{z^c} t^j } d z^c =\eta_j d t^j
\end{eqnarray}

\end{proof}

We arrive to our basic result
\begin{thm}\label{my thm}
The flat pencil of metrics on the Slodowy slice $Q'$ obtained from the dispersionless limit of  Drinfeld-Sokolov bihamiltonian structure on $Q$ (see proposition \ref{the pencil}) is regular quasihomogenous of degree $\kappa-1\over \kappa+1$.
\end{thm}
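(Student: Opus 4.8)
The plan is to verify the four defining properties of Definition \ref{def reg} together with the nondegeneracy of the tensor $R_i^j$ in \eqref{regcond}, using the explicit entries of the two metrics already established. The natural candidate for the potential $\tau$ is $\tau = t^1 = z^1$, since Proposition \ref{walg} identifies $z^1$ as the Virasoro density that governs the scaling behaviour of the whole structure. With this choice I would set $E := \nabla_2 \tau$ and $e := \nabla_1 \tau$, and the goal becomes showing that these vector fields are, up to normalization, the Euler field $E' = \sum_i (\eta_i+1) z^i \partial_{z^i}$ and the unit field $\partial_{z^n}$, respectively.

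First I would compute $E^i = g_2^{is}\partial_s \tau = g_2^{is}\partial_s z^1 = g_2^{i1}$, which by the preceding proposition equals $(\eta_i+1)t^i$; hence $E = \sum_i (\eta_i+1)t^i \partial_{t^i}$ is exactly the Euler vector field $E'$. Property (2), $\Lie_E (~,~)_2 = (d-1)(~,~)_2$, then follows directly from the quasihomogeneity of $g_2^{ij}$ proved in Proposition \ref{homg of g2}: since $g_2^{ij}$ has degree $2\eta_i + 2\eta_j$ and the coordinate $t^i$ carries weight $\eta_i+1$ under $E$, a short weight count gives the scaling factor, from which one reads off $d = \frac{\kappa-1}{\kappa+1}$ (equivalently $d-1 = \frac{-2}{\kappa+1}$). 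For the field $e = \nabla_1 \tau$, I would use that $g_1^{ij}$ is constant antidiagonal with respect to the exponents (the proposition introducing the flat coordinates $t^i$), so $e^i = g_1^{i1}$ is nonzero only for the exponent complementary to $\eta_1 = 1$, namely $\eta_n = \kappa$; this pins down $e$ as a constant multiple of $\partial_{t^n}$, and properties (1) $[e,E]=e$ and (4) $\Lie_e(~,~)_1 = 0$ become immediate consequences of $e$ being the constant unit field and $g_1$ being constant in the $t$-coordinates.

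The crux of the argument, and the step I expect to be the main obstacle, is property (3), $\Lie_e (~,~)_2 = (~,~)_1$. This is precisely where the differential relation of Proposition \ref{diff relaton} enters: the identity $\partial_{z^r} g_2^{ij} = g_1^{ij}$ says that differentiating the second metric along the distinguished coordinate $z^r = z^n$ reproduces the first metric. Since $e$ is (a constant multiple of) $\partial_{z^n}$ and $g_1^{ij}$ is constant, computing $\Lie_e (~,~)_2$ reduces to this single partial derivative, and I would need to check that the normalization of $e$ coming from $g_1^{1n}$ is consistent with the normalization making $\partial_{z^r} g_2^{ij}$ equal to $g_1^{ij}$ exactly (rather than up to scalar). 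This is the point where the specific constant $\bil e f = 1$ and the antidiagonal structure of $A_{ij}$ from Proposition \ref{Gold} must be tracked carefully.

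Finally, for regularity I would examine $R_i^j = \frac{d-1}{2}\delta_i^j + {\nabla_1}_i E^j$. Because $E^j = (\eta_j+1)t^j$ and $g_1$ is constant in flat coordinates, the covariant derivative ${\nabla_1}_i E^j$ is essentially $\partial_i E^j = (\eta_i+1)\delta_i^j$ (the connection coefficients vanish for a constant metric), so $R$ is diagonal with entries $\frac{d-1}{2} + (\eta_i+1)$. Substituting $d-1 = \frac{-2}{\kappa+1}$ and using $1 \le \eta_i \le \kappa$ together with $\eta_i + \eta_{r-i+1} = \kappa+1$, each diagonal entry is manifestly nonzero, so $R$ is nondegenerate on all of $Q'$. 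Invoking Theorem \ref{dub flat pencil} then upgrades the regular quasihomogeneous flat pencil to a Frobenius structure of degree $\frac{\kappa-1}{\kappa+1}$, completing the proof of Theorem \ref{my thm} and hence of the main Theorem \ref{main thm}.
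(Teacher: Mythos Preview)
Your overall strategy matches the paper's exactly: choose $\tau$ proportional to $t^1$, read off $E$ and $e$ from the first rows of $g_2$ and $g_1$, and verify the four axioms of Definition~\ref{def reg} plus regularity using Propositions~\ref{walg}, \ref{diff relaton}, \ref{homg of g2} and the constancy of $g_1$ in the $t$-coordinates. However, your specific choice $\tau = t^1$ is wrong, and this is not merely the cosmetic normalization issue you flag for property~(3); it breaks properties~(1) and~(2) outright. With $\tau = t^1$ you get $E = \sum_i(\eta_i+1)t^i\partial_{t^i}$ and $e = c\,\partial_{t^n}$ for some constant $c$, so
\[
[e,E] = c(\eta_n+1)\partial_{t^n} = c(\kappa+1)\partial_{t^n} = (\kappa+1)e \neq e.
\]
Likewise, since $g_2^{ij}$ is quasihomogeneous of degree $2\eta_i+2\eta_j$ under the grading $\deg z^i = 2\eta_i+2$, your $E$ acts on it with eigenvalue $\eta_i+\eta_j$, and the Lie derivative of a $(2,0)$-tensor gives $\Lie_E g_2^{ij} = (\eta_i+\eta_j) g_2^{ij} - (\eta_i+1)g_2^{ij} - (\eta_j+1)g_2^{ij} = -2\,g_2^{ij}$, forcing $d=-1$ rather than $\tfrac{\kappa-1}{\kappa+1}$.

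The fix, which is what the paper does, is to take $\tau = \tfrac{1}{\kappa+1}t^1$. Then $E = \tfrac{1}{\kappa+1}\sum_i(\eta_i+1)t^i\partial_{t^i}$ and $e = \partial_{t^r}$, and one checks directly $[e,E] = \tfrac{\kappa+1}{\kappa+1}\partial_{t^r} = e$ and $\Lie_E g_2^{ij} = \tfrac{-2}{\kappa+1}g_2^{ij}$, giving the correct degree. With this $E$ the regularity tensor becomes $R_i^j = \tfrac{\eta_i}{\kappa+1}\delta_i^j$, not $\bigl(\tfrac{d-1}{2}+\eta_i+1\bigr)\delta_i^j$ as you wrote; it is still diagonal with nonzero entries, so your conclusion on regularity survives. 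Everything else in your outline (use of $\partial_{z^r}g_2^{ij}=g_1^{ij}$ for property~(3), constancy of $g_1$ for property~(4)) is correct and coincides with the paper.
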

\begin{proof}
In the notations of definition \ref{def reg} we take $\tau ={1\over \kappa+1}t^1$ then
\begin{eqnarray}
 E&=& g^{ij}_2 {\partial_{t^j} \tau  }~{\partial_{t^i}  }={1\over \kappa+1} \sum_{i} (\eta_i+1) t^i{\partial_{t^i} },\\\nonumber
 e &=&  g^{ij}_1 {\partial_{t^j} \tau }~{\partial_{t^i}  }={\partial_{t^r} }.
  \end{eqnarray}
We see immediately  that   \[ [e,E]=e\]
The identity \begin{equation} \Lie_e (~,~)_2 =
(~,~)_1 \end{equation} follows from and the fact that ${\partial_{t^r} }={\partial_{ z^r}}$ and proposition \ref{diff relaton}. The fact that
\begin{equation}
\Lie_e(~,~)_1
=0.
\end{equation}
 is a consequence  from the quasihomogeneity of the matrix $g_1^{ij}$ (see lemma \ref{homg of g1}). We also obtain from proposition \ref{homg of g2}
\begin{equation} \Lie_E (~,~)_2 =(d-1) (~,~)_2
\end{equation}
since
\begin{equation}
\Lie_E(~,~)_2(dt^i,dt^j)= E(g^{ij}_2)-{\eta_i+1\over \kappa+1}g^{ij}_2-{\eta_j+1\over \kappa+1} g^{ij}_2={-2\over \kappa+1} g^{ij}_2.
\end{equation}
The (1,1)-tensor
\begin{equation}
  R_i^j = {d-1\over 2}\delta_i^j + {\nabla_1}_iE^j = {\eta_i \over \kappa+1} \delta_i^j.
\end{equation}
Hence it is nondegenerate. This complete the proof.
\end{proof}

Now we are ready to prove theorem \ref{main thm}.

\begin{proof}{[Theorem \ref{main thm}]}
It follows from  theorem \ref{my thm} and \ref{dub flat pencil}  that $Q'$  has a Frobenius structure of degree $\kappa-1\over \kappa+1$ from the dispersionless limit of Drinfeld-Sokolov bihamiltonian structure. This Frobenius structure is polynomial since in the coordinates  $t^i$ the potential $\mathbb{F}$ is constructed from   equations \eqref{frob eqs} and we know from proposition \ref{mat poly}  that the matrix $g^{ij}_2$ is polynomial.
\end{proof}

\subsection{Conclusions and remarks}
The results of the present  work can be generalized  to  some class of distinguished  nilpotent elements in simple Lie algebras. In particular, we notice that  the existence  of  opposite Cartan subalgebras is  the main reason behind the examples of algebraic Frobenius manifolds constructed in \cite{mypaper} which are associated to distinguished nilpotent elements in the Lie algebra of type $F_4$.  In \cite{mypaper}  we discussed how these examples support Dubrovin conjecture. Our goal is  to develop  a method  to uniform the construction of all algebraic Frobenius manifolds that could be obtained from  distinguished nilpotent elements in simple Lie algebras by performing the generalized  Drinfeld-Sokolov reduction. Similar  treatment  of the present work  for algebraic Frobenius manifolds that could be obtained from   subregular nilpotent elements in simple Lie algebras  is now under preparation.\\

\noindent{\bf Acknowledgments.}

This work is partially supported by the European Science Foundation Programme ``Methods of Integrable Systems, Geometry,
Applied Mathematics" (MISGAM).

\end{document}